\documentclass[10pt]{amsart}
\usepackage{amsmath}
\usepackage{graphicx} % Required for inserting images
\usepackage{listings} % For typesetting (pseudo-) code
\usepackage{hyperref,xcolor}
\usepackage[T1]{fontenc}
\usepackage{pdfsync}
\usepackage{yfonts}      % For biblical-style font (Fraktur)
\usepackage{tcolorbox}   % For creating a commandment-style box
\usepackage{lipsum} 

\newtheorem{theorem}{Theorem}
\newtheorem{conjecture}[theorem]{Conjecture}
\newtheorem{corollary}[theorem]{Corollary}

\newtheorem{lemma}[theorem]{Lemma}

\newtheorem{remarks}[theorem]{Remarks}
\newtheorem{example}[theorem]{Example}

\newcommand{\seqnum}[1]{\href{https://oeis.org/#1}{\rm \underline{#1}}}
\DeclareMathSymbol{\lsim}{\mathord}{symbols}{"18}

\title{On $t$-sumfree sequences}
\author[van Berkel\and Bosma]{Daan van Berkel\and Wieb Bosma}
\email{daan.v.berkel.1980@gmail.com\and bosma@math.ru.nl} 
\date{today}

\begin{document}
\begin{abstract}
In this paper we explore the world of $t$-sumfree sequences, primarily
for $t=2$ and $t=3$. Starting with $t$ increasing positive integers, build
an infinite sequence by choosing as the next entry the least positive
integer that is not the sum of $t$ distinct previous values. The main
open problem concerns the question of the conjectured ultimate periodicity
of these sequences. We formulate a sufficient condition
to prove that a conjectured preperiod and period are
correct. We also prove periodicity for a considerable subclass
of 2-sumfree and of 3-sumfree sequences. Finally, we present
the results of extensive computations on 3-sumfree sequences.
\end{abstract}

\subjclass[2000]{}
\maketitle

\section{Introduction}
The infinite sequences of increasing positive integers we study in this paper arise as follows: initialize the first $t\geq 1$ values $s_1<s_2<\cdots<s_{t}$, and for $n>t$ let $s_n$ be the smallest
integer exceeding $s_{n-1}$ that is not equal to the sum of
$t$ different previous elements in the sequence; these are {\it 
greedy, strict $t$-sumfree sequences}, which we will refer to in 
what follows in short as {\it $t$-sumfree} sequences. The requirement 
that $s_n$ is the {\it smallest} integer is the greedy aspect, and
that it is not the sum of $t$ {\it different} previous entries is the
strict part. The main new results in this paper, which we will summarize
in this introduction, concern the case $t=3$, and the most
interesting problem is the periodicity question. 

The prototypical example is the sequence $S=S_{1,2,3}$ for $t=3$,
starting
$$S=(1, 2, 3, 4, 5, 13, 14, 15, 25, 26, 27, 37, 38, 48, 49, 50, 60, 61, \cdots);$$
it is  \seqnum{A026471} in the OEIS (Online Encyclopedia of Integer Sequences). Our initial reason for looking at it was to
consider the question of its automaticity and the possibility
to prove statements about this using Walnut \cite{walnut:moussavi}, \cite{walnut:shallit}. It turned out \cite{MMath} that
this sequence is `eventually periodic', and that the same is true for many
alternative starting values (for any $t$), leading to the very 
general conjecture that all $t$-sumfree sequences are `eventually 
periodic'. We hyphenated these statements because they do not make 
sense as stated, since the sequences are strictly increasing. What is 
meant by such an informal
statement is that the corresponding characteristic sequence is
eventually periodic. In the above example this characteristic sequence
$C=C_{1,2,3}$ starts as
$$C=(1, 1, 1, 1, 1, 0,0,0,0,0,0,0,1,1,1,0,0,0,0,0,0,0,0,0,1,1,1,0,\cdots),$$
where the final 1 here refers to the entry 27 in $S_{1,2,3}$.
A consequence of such ultimate periodicity 
is that there exists a positive integer $m$
such that $S$ modulo $m$ is periodic from a certain entry on, with
period length that we will indicate by $p$ and preperiod length $k$. 
In our example, $C$ becomes periodic after
6 ones, with a period of length 23 containing 5 ones:
$$C_{1,2,3}=(1^5,0^7,1,\overline{1^2,0^9,1^3,0^9}).$$
Here the expression $1^5$ is shorthand for $1,1,1,1,1$ etc., and the bar
indicates indefinite repetition of the block.
This implies that $S$, when taken modulo $m=23$ (the sum of the exponents
in the period above), becomes periodic after the
sixth entry, with period length $p=5$ (because of the five ones):
$$S\equiv(1,2,3,4,5,13,\overline{14, 15, 25, 26,30})\bmod 23.$$
Here it is implied in the notation that {\it all} values 
$14+j\cdot 23$ for $j\geq 0$ are included in $S$, and likewise 
for the other residue classes listed under the bar.

Yet another way to express the periodicity properly, is by looking at the
first difference sequence $D_{f,g,h}$ of $S_{f,g,h}$, defined by
$$\big(D_{f,g,h}\big)_{i=1}^\infty=\big(S_{f,g,h}[i+1]-S_{f,g,h}[i]\big)_{i=1}^\infty.$$
Thus
$$D_{1,2,3}=(1,1,1,1,8,1,1,10,1,1,10,1,10,\cdots)=\big(1^4,8,1,\overline{1,10,1^2,10}\big),$$
whose correspondence with $C_{1,2,3}$ will be clear, and from this
the corresponding property for $S_{1,2,3}$ of being ultimately periodic.
Note that $D$ and $S$ will have the same (pre)period lengths.

Informal statements about periodicity of 2-sumfree sequences $S_{f,g}$ or 3-sumfree sequences $S_{f,g,h}$, should henceforth
be interpreted as proper statements about periodicity of the corresponding characteristic sequence and the sequence of differences.

The type of question we would like to address is this:
is it true that every $t$-sumfree sequence is eventually
periodic in the above sense, and if so, what can be said about
(the dependence of) period length $p$, preperiod size $k$ and modulus $m$ (on $t$ and $s_1, s_2, \cdots, s_{t}$)? Our main results concern the cases $t=2$ and $t=3$. We emphasize
that in our terminology
the sumfree sequences in these theorems are greedy and strict.

In Section \ref{sec:t=2} we prove a more precise version (see Theorem \ref{thm:2subs}) of the following.
\begin{theorem}\label{thm:k2short}
For every $d\geq 1$ and $f\geq d+1$ the 2-sumfree sequence $S_{f,f+d}$ is ultimately periodic.
\end{theorem}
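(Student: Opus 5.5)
The plan is to prove Theorem~\ref{thm:k2short} by giving an \emph{explicit} description of $S_{f,f+d}$, one that depends only on $f$ and $d$, and verifying it by strong induction on the integers $n$. Periodicity then follows immediately from the shape of that description.

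The first step is experimental. I would compute $S_{f,f+d}$ for a range of small $d$ and all $f\ge d+1$, and read off the block structure of the characteristic sequence. For instance, $S_{2,3}=(2,3,4,8,9,14,15,20,\dots)$ already shows short runs of consecutive integers separated by gaps of growing length. The aim is a statement of the following form: there are explicit $m=m(f,d)$, a preperiod bound $N_0=N_0(f,d)$ and a set $R\subset\mathbb{Z}/m\mathbb{Z}$ such that, for $n\ge N_0$, we have $n\in S$ if and only if $n\bmod m\in R$. The set $R$ should be a union of a few residue intervals. I expect $R$ to satisfy two properties:
\begin{itemize}
\item it is sum-free modulo $m$, meaning $(R+R)\cap R=\emptyset$;
\item it is ``greedy-complete'', meaning every residue outside $R$ lies in $R+R$, or is hit by a sum involving one of the preperiod elements.
\end{itemize}
The hypothesis $f\ge d+1$ presumably guarantees that the initial run $f,f+d,f+d+1,\dots$ is long relative to the gap structure. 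This is what should make a uniform formula possible, with no case explosion.

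The second step is the inductive verification, which splits into two halves.

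\emph{(i) Exclusion.} Every $n\ge N_0$ with $n\bmod m\notin R$ must be a sum $a+b$ of distinct earlier elements. Here I would exhibit the representation explicitly: take $a$ from the initial block (so $a$ ranges over an interval of length about $d$ or $f$), and take $b=n-a$. By the induction hypothesis, $b$ lies in a known residue class of $R$ and satisfies $b<n$. I must check $a\neq b$, and that $b$ exceeds the preperiod, or else lies in it as an actual element.

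\emph{(ii) Inclusion.} Every $n\ge N_0$ with $n\bmod m\in R$ must \emph{not} be such a sum. When both summands lie beyond the preperiod, this follows from $(R+R)\cap R=\emptyset$ modulo $m$. The remaining cases, where a summand is a preperiod element, have to be checked separately using the actual residues of those elements. This is where the condition $f\ge d+1$ will be used, to show that those residues also avoid $R-R$ appropriately.

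The base case is a finite computation up to roughly $N_0+2m$. It should be organised as a verification that the preperiod elements and the first full period are as claimed. Together with (i) and (ii), this gives ultimate periodicity with modulus $m$ and an explicit preperiod.

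The main obstacle is the first step: guessing the correct closed form for $m$, $R$ and the preperiod, uniformly in $f$ and $d$. Once the form is right, steps (i) and (ii) are bookkeeping, but a wrong guess in a single boundary residue breaks the induction. I would first try $m$ of the shape $c_1 f+c_2 d$, with $R$ consisting of one or two intervals whose endpoints are linear in $f$ and $d$. I would fit these to the data for small parameters. If the parity of $f$ or of $d$ matters, I would split into cases accordingly. The second delicate point is the boundary between preperiod and period in step (ii), where small preperiod elements can produce sums landing in $R$. There I would bound the preperiod elements' sums directly rather than relying on residues.
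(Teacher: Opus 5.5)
\paragraph{Assessment.} Your architecture is the one the paper uses for Theorem~\ref{thm:2subs}: an explicit description of $S_{f,f+d}$ as finitely many exceptional elements plus a union of residue classes, then an exclusion check and a sum-freeness check. But as it stands the proposal proves nothing. The only non-routine input of that argument is the closed form itself, and you defer it to an experiment you have not carried out. You also hedge over ``one or two intervals'' and parity splits, which turn out to be unnecessary.

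\paragraph{The missing ingredient.} Put $m=3f+d-1$ and $R=\{f+d-1,\dots,2f+d-2\}$. For $f\ge d+1$ one has $z\in S_{f,f+d}$ if and only if $z\in\{f,2f+d-1\}$, or $z>f+d-1$ and $z\bmod m\in R$. Given this, your two steps go through as follows.
\begin{itemize}
\item[(ii)] This reduces to interval estimates on representatives: $R+R\subseteq[2f+2d-2,4f+2d-4]$, $R+f\subseteq[2f+d-1,3f+d-2]$, $R+(2f+d-1)\subseteq[3f+2d-2,4f+2d-3]$, and $f+(2f+d-1)=m$. All of these lie strictly between $2f+d-2$ and $m+f+d-1$, i.e.\ in the gap between consecutive translates of $R$. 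You must also check directly that $2f+d-1$ is not a sum of two distinct smaller elements.
\item[(i)] For integers above $2f+d-1$ with residues $2f+d-1,\dots,3f+d-2$, use $f+z$ with $z$ in the periodic part. For residues $0,\dots,f+d-2$, use two-term sums wrapping past a multiple of $m$. Residue $f+d-2$ needs the exceptional element, via $(2f+d-2+km)+(2f+d-1)$.
\end{itemize}

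\paragraph{Where the hypothesis enters.} Your prediction here is off, and this is the one delicate point. The estimates in (ii) hold for all $f,d\ge1$, so $f\ge d+1$ is not used there. It is needed in (i), at the first wrap-around. That is also why this stage must be handled symbolically in $f$ and $d$, not by ``a finite computation''.

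Take $1\le r\le f+d-2$. By induction, the only elements of $S$ below $m+r$ are $f$ and $f+d,\dots,2f+d-1$.
\begin{itemize}
\item A representation $f+b$ is impossible, since $S$ has no element in $[2f+d,3f+2d-3]$.
\item Distinct pairs from $\{f+d,\dots,2f+d-1\}$ give exactly the integers in $[2f+2d+1,4f+2d-3]$.
\end{itemize}
So $m+r$ is excluded if and only if $r\ge d+2-f$. This covers every $r\ge1$ precisely when $f\ge d+1$; the case $r=0$ is handled regardless by $m=f+(2f+d-1)$.

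For $f=d=2$ the description indeed fails: $8\in S_{2,4}$, although $8\equiv 1\pmod 7$ lies outside $R=\{3,4\}$.
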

We could be
much more precise, for instance about the period
length and about an extended version with a smaller 
restriction on $f$, but as the statements become rather
technical, instead we will focus here
on the case $t=3$. 

Fairly extensive
computations do lead us to believe that the following is indeed true.
\begin{conjecture}
For every $g> f\geq 1$ the 2-sumfree sequence $S_{f, g}$ is ultimately periodic.
\end{conjecture}
For much more on the case $t=2$, in particular precise conjectures on periodicity for all pairs $f, g$
and the exact size of period and preperiod lengths, consult
\cite{BB2}. 

For 3-sumfree sequences the situation is less obvious. In Section~\ref{sec:t=3} we prove the following.
\begin{theorem}\label{thm:k=3d>1short}
For every $d\geq 1$, every $f\geq 1$ and every $g\geq\max(f+1,d-1)$ the 3-sumfree sequence $S_{f,g,g+d}$ is ultimately periodic.
\end{theorem}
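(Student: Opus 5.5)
The approach is to guess an explicit closed description of $S=S_{f,g,g+d}$, with its preperiod, modulus $m$ and the residue classes in the period given in terms of $f,g,d$, and then to prove by strong induction on $n$ that the greedy rule reproduces exactly this description. This mirrors what we expect for Theorem~\ref{thm:2subs} in the case $t=2$.

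The first step is to compute the opening of the sequence by hand. The only sum of three distinct terms among $f,g,g+d$ is $f+2g+d$. Hence $S$ continues with the full block $g+d,g+d+1,\dots,f+2g+d-1$. After that, the sums $f+g+x$ and $f+x+y$, $g+x+y$ with $x,y$ in this block cover a long interval starting at $f+2g+d$. So the sequence consists of a few small "seed" elements, $f$, $g$ and the first block, followed by blocks of consecutive integers separated by gaps. Working out the positions of the next one or two blocks, guided by computer experiments for small $f,g,d$ (as in the example $C_{1,2,3}$ with $m=23$), I will write down a conjectural form for $S$. It should consist of finitely many exceptional initial terms together with a finite set $R$ of residue classes $r \bmod m$, all of whose members beyond some threshold lie in $S$. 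The expectation is that $m$ and $R$ are affine expressions in $f,g,d$, and that the classes in $R$ come in consecutive runs, reflecting the blocks $1^a$ in $C$.

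The inductive step has two parts. First, every integer $N$ not predicted to lie in $S$ must be shown to be a sum of three distinct earlier terms. Such representations will use $f$ and/or $g$ together with one or two terms from earlier blocks, shifted by multiples of $m$. Second, every predicted $N$ must be shown not to be such a sum. Here I will split the triples according to how many of the small seed elements $f,g$ (and the first block) they contain:
\begin{itemize}
\item triples with zero seed elements, i.e.\ three periodic terms;
\item triples with one seed element;
\item triples with two seed elements.
\end{itemize}
In each case the residue of the sum mod $m$ is computed from $R$, and one checks that it never lands in $R$, apart from finitely many small values that are verified directly. The key point is that $R+R+R$ and the sets $\{f,g\}+R+R$, $f+g+R$ (mod $m$) avoid $R$. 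Since $R$ is a union of intervals, these become interval computations mod $m$.

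I expect the main obstacle to be this avoidance check, together with the transition from the preperiod into the period. Near the start, sums involving the first long block $[g+d,f+2g+d-1]$ are not yet "periodic", so they must be handled separately. The hypothesis $g\ge \max(f+1,d-1)$ should enter precisely here: it guarantees that the first block (of length $f+g$) is long compared with the gap $d$, so that the interval sums $f+x+y$ and $g+x+y$ cover whole gaps without holes. Without this, extra sporadic terms would appear and break the pattern. If the direct residue bookkeeping becomes too messy, I would first try to reduce to the characteristic sequence $C$. There I would show that the window of the last $m$ values of $C$ (plus the seed terms) determines the next value, and that two such windows coincide. This gives ultimate periodicity without fully identifying $R$.
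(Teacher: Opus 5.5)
Your strategy matches the paper's. The statement follows from Theorem \ref{thm:gend>1} ($d\ge2$) and Theorem \ref{thm:d=1} ($d=1$), each an explicit description of $S_{f,g,g+d}$ checked by interval sums. But the proposal stops where the proof starts: you never state $m$, $R$ or the preperiod, and for an infinite family that description \emph{is} the proof. The missing idea is what makes the pattern reproduce itself. Put $L=f+g$, $h=g+d$, $B_1=[h,h+L-1]$ and $\Sigma=\{f,g\}\cup B_1$. For $d\ge2$, $S$ consists of $f$, $g$, $B_1$ and blocks $B_k=[b_k,b_k+L-1]$. Here $b_2=3(f+g+h)-5$ is one more than the sum of the three largest terms of $B_1$, and $b_{k+1}=b_k+m$ with $m=3f+5g+2d-3$. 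For $k\ge2$ the gap after $B_k$ is exactly $B_k+\Sigma^{(2)}$. Here $\Sigma^{(2)}=\{f+g\}\cup[f+h,\,2h+2L-3]$ is the set of sums of two distinct elements of $\Sigma$ (under the condition discussed below), and $\max(B_k+\Sigma^{(2)})=b_{k+1}-1$. So each gap is filled by one term of the newest block plus two \emph{fixed} small terms, typically from $B_1$; your highlighted list ($R+R+R$, $\{f,g\}+R+R$, $f+g+R$) leaves these out. Avoidance is then immediate with $R=[h-2,h+L-3] \bmod m$. Residues of $\Sigma+R+R$ lie in $[2h+f-4,\,3h+3L-7]$ and those of $R+R+R$ in $[3h-6,\,3h+3L-9]$. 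Both intervals lie strictly between $h+L-3$ and $h-2+m$, which uses $d\ge2$ and $g>f$. For $d=1$ you need a different description: each period modulo $10g+6f-3$ contains two blocks, of lengths $L-1$ and $L$.

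Second, your account of the hypothesis is wrong on part of the range, so no uniform description can be verified there. The interval sums need $f+g\ge d+1$, so that $g+B_1$ meets the sums of two distinct elements of $B_1$, i.e.\ $h+1\le f+2g$. The first gap also needs $d\le 2(f+g)-5$, which within that range fails only for $(f,g,d)=(1,2,2)$. Now $g\ge\max(f+1,d-1)$ gives $f+g\ge d+1$ only when $f\ge2$. For $f=1$, $g=d-1$, $d\ge3$, the integer $2f+2g+h=4d-1$ is not a sum of three distinct earlier terms and enters $S$. For example, $S_{1,2,5}=(1,2,5,6,7,11,\ldots)$ and $S_{1,3,7}=(1,3,7,8,9,10,15,\ldots)$. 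Likewise $S_{1,2,4}$, which is inside the stated range, has preperiod $13$ and period $2$. These cases need separate arguments. The paper treats $S_{1,2,4}$ as a separately computed example, and Lemma \ref{lemma} turns such a computation into a proof. However, Theorem \ref{thm:gend>1} assumes $f+g\ge d+1$, so the paper's own reduction also leaves the family $S_{1,d-1,2d-1}$ uncovered. Finally, your fallback fails. $C_n$ is not determined by a fixed prefix plus a bounded window of recent values, since $n$ may be a sum of three terms near $n/3$. The valid finite criterion is Lemma \ref{lemma}, and it settles one sequence at a time, not an infinite family.
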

Note that for every pair of parameters $(f, d)$, Theorem \ref{thm:k=3d>1short}
leaves only finitely many 3-sumfree sequences $S_{f,g,g+d}$ (values of $g$) that could possibly be not ultimately periodic.

We did very extensive experiments to provide evidence for the next
conjecture.
\begin{conjecture}[folklore]\label{conj:folk}
Every $3$-sumfree sequence $S_{f,g,h}$ is ultimately periodic.
\end{conjecture}
The reservations we have about it, stem from our attempts
to provide computational evidence for
this conjecture. 
These experiments lead to a proof of the following theorem.
\begin{theorem}\label{thm:smallshort}
Among all triples $(f,g,h)$ with $f<g<h$, satisfying $f\leq 25$ and $d=h-g\leq 25$ there are at most 12 exceptions
to the rule that the 3-sumfree sequence $S_{f,g,h}$ is ultimately periodic.
\end{theorem}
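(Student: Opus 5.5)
The plan is to reduce the theorem to a finite computation by means of Theorem~\ref{thm:k=3d>1short}, and then to certify each remaining case with the sufficient condition for a conjectured preperiod and period announced in the abstract. Write $h=g+d$ with $1\leq d\leq 25$ and $1\leq f\leq 25$. By Theorem~\ref{thm:k=3d>1short}, $S_{f,g,g+d}$ is ultimately periodic as soon as $g\geq\max(f+1,d-1)$. Since $f<g$ forces $g\geq f+1$, only the triples with $f+1\leq g<d-1$ remain open. This set is finite: it has at most $25\cdot 25\cdot 25$ elements, and in fact far fewer because $g<d-1\leq 24$. The theorem therefore amounts to showing that all but at most 12 of these finitely many sequences are ultimately periodic.

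For each remaining triple $(f,g,g+d)$ I would compute the sequence $S_{f,g,g+d}$ greedily up to a large bound. From the characteristic sequence $C_{f,g,g+d}$ I would read off candidate values: a modulus $m$, a preperiod length $k$ and a period length $p$. I would then apply the sufficient criterion to be proved later in the paper. The idea behind it is as follows. Once the sequence modulo $m$ is known explicitly on a window covering the preperiod plus a few full periods, the set of integers representable as a sum of three distinct earlier terms is a finite union of residue classes modulo $m$ beyond some explicit threshold. Periodicity then propagates by induction, provided the computed window is long enough. Concretely, one needs the window to exceed roughly the largest preperiod element plus a small multiple of $m$; past that point, every sum of three distinct terms that involves a late term behaves identically in each block of length $m$. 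Checking this condition is a finite verification for each triple.

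The main obstacle is the handful of triples where the computation shows no stabilization within feasible length, or where the candidate period is so large that the window required by the criterion is out of reach. These I would not resolve. Instead I would list them explicitly and count them, and the claim is that there are at most 12. Two further practical difficulties need care. First, the computation must be made efficient, for instance by representing the set of 3-sums of distinct terms as a bitset updated incrementally, combining the 2-sum set with each new term. Second, for the verified cases the criterion must be checked rigorously and not merely observed, so the verification should be carried out with exact integer arithmetic and an explicit window bound for every certified triple.
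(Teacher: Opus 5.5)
Your strategy is the paper's. Theorem~\ref{thm:k=3d>1short} leaves only the triples with $f<g\le d-2$, i.e.\ $\sum_{d=4}^{25}\binom{d-2}{2}=\binom{24}{3}=2024$ of them. For each, the paper computes an initial segment, reads off a candidate starting position $K$ and block length $M$, and certifies ultimate periodicity with Lemma~\ref{lemma}. The 12 triples left uncertified after at least $500000$ terms are the allowed exceptions, so the bound 12 is just the outcome of the computation, as in your plan.

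What you get wrong is the size of the window the certificate needs. Since you intend to check an explicit window bound for each triple, this matters. It is not enough for $C$ to repeat from position $K+1$ up to $K$ plus a small multiple of $M$. For $t=3$, Lemma~\ref{lemma} requires $Q$ identical non-null blocks with $(Q-6)M\ge 2K$, so the verified segment must reach about $3K+6M$.

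\begin{itemize}
\item The term $3K$ is forced because a sum of three distinct terms $\le K$ involves no ``late'' term at all. Such a sum can be as large as about $3K$, and the periodic structure gives no control over it, so nothing stops it from breaking the pattern between $K+cM$ and $3K$. In the proof of the lemma, this is what guarantees $q_3>K$ when $v-M=q_1+q_2+q_3$, so that $q_3+M\in S$.
\item The term $6M$ comes from strictness. When $v=r_1+r_2+r_3$, you want to replace some $r_i$ by $r_i-M$ to represent $v-M$. This is blocked whenever $r_i-M$ is itself one of the summands. The extremal blocked configuration $r_1\le K+M$, $r_2=r_1+M$, $r_3=r_2+M$ gives $v\le 3K+6M$.
\end{itemize}

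So your claim that every sum involving a late term ``behaves identically in each block'' is also too quick. Replace your heuristic threshold with the exact hypothesis of Lemma~\ref{lemma} and your proposal becomes the paper's proof. For triples with a preperiod much longer than the block length, this roughly triples the segment that must be computed.
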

Theorem \ref{thm:k=3d>1short} leaves at most 2024 triples $(f, g, h)$
with $f\leq 25$ and $d\leq 25$ to be investigated: for each $d\geq 4$ there are $\frac{(d-3)(d-2)}{2}$ triples not covered. All but 12 of these
were shown to be ultimately periodic.
For the proofs of ultimate periodicity, we used an elementary lemma
(Lemma \ref{lemma} in Section \ref{sec:lemma}) that shows how such
proofs follow from a {\it finite} computation.

Section \ref{sec:algo} contains some remarks on the algorithms 
we implemented, and Section \ref{sec:small} contains some 
computational details, such as tables with (parts of) the results
mentioned in Theorem \ref{thm:smallshort} and
the number of terms (at least 500000) computed for the 
12 possible exceptions.

Apart from the theorems above, we have no convincing argument for Conjecture~\ref{conj:folk}.

\section{Related work}\label{sec:related}
There is a considerable literature on sumfree sets and sequences.
In most cases the sequences called sumfree are greedy, 
$2$-sumfree sequences of the non-strict kind: after an initial
segment (sometimes consisting of more than 2 elements) each entry
is required to be the smallest integer exceeding previous entries
that is not the sum of any two previous entries: $a_n>a_{n-1}$ and not $a_n=a_i+a_j$
for $1\leq i, j\leq n-1$ (note the inclusion of $i=j$ here).

In the case of {\it sumfree sets} there is often emphasis on questions
of density: how large can sumfree sets be relative to their universe?
For our sequences we only mention that it seems reasonable to conjecture
that the maximal density for any $t$-sumfree sequence is $\frac{1}{t}$,
compare Theorem \ref{thm:k2odd} and Theorem \ref{thm:2subs}.

We summarize other relevant results we are aware of in this section.

Apparently (cf.~Guy, \cite{guy} Problem E32) Dickson \cite{dickson}
was the first to coin
questions about sumfree sequences; however, his main focus was on constructing
sequences with the property that every positive integer is the sum of
(at most) $t$ elements in the sequence (allowing repetitions). 

Queneau \cite{queneau} defined $s$-additive sequences, which consist, after an 
initial segment, of those positive integers that can be expressed
in exactly $s$ ways as the sum of two different previous entries. 
For $s=0$ this is the same as our $2$-sumfree sequences. 
His Section 4 concerns this case (he calls those sequences non-additive). He considered many explicit cases $S_{f,g}$, and formulates several conjectures,
mostly subsumed by our Theorem \ref{thm:2subs} below. They generally state that the
resulting sequences are finite unions of residue classes for some
modulus depending on $f,g$, with finitely many exceptions (both additional
and missing entries).

Cameron noted a natural bijection between the set of 
non-strict, greedy 2-sumfree 
sequences and binary sequences $\sigma$.
For given $\sigma$ the positive integers $n$ are tested in
order for inclusion in $S$: when $n$ 
is not the sum of two elements of $S$ the first element of $\sigma$
not used yet decides whether $n$ is added to $S$ (if it is 1) or not.
Cameron \cite{cameron} proved that $S$ is ultimately periodic if $\sigma$ is, but the
converse is not clear. In fact
Calkin and Finch \cite{calkinfinch} did extensive calculations
which left several periodic candidates for $\sigma$ that 
seem to give aperiodic $S$.

\section{A useful lemma}\label{sec:lemma}
The following useful lemma is an adaptation and generalization of a criterion 
formulated by Calkin and Finch \cite{calkinfinch}. 
It shows that ultimate periodicity of 
sumfree sequences can be established by a finite computation. The idea
behind the lemma is simple: if a non-zero block of length $m$ is repeated
sufficiently often in the (characteristic sequence of a) sumfree sequence, 
then it is easy to prove that the block will be repeated indefinitely. 
The problem is that the sequence may start with a preperiod (of unknown
length) and that we need to specify what we mean by `sufficiently often'. 
Note that the initial part of the sequence $S$ may cause
problems, since for $z<s_k$ it is not true that $z\notin S$ 
if and only if $z$ is the $t$-sum of entries of $S$. Also, large
gaps in $S$ correspond to long strings of zeroes in $C$ that will not
persist indefinitely. Finally, although inspired by Lemma 4.4 of \cite{calkinfinch}
that deals with sequences free of sums of two elements, the situation here is
more complicated than simply generalizing the argument to sums of $t$ elements,
since contrary to Calkin and Finch we assume {\it strict} sumfreeness; this forces
a larger number of repeating blocks to ensure ultimate periodicity.

Note that a similar criterion for deciding non-periodicity based on a finite
initial segment is lacking.

\begin{lemma}\label{lemma}
Suppose that the characteristic sequence $C$ corresponding to a $t$-sumfree
sequence $S$ contains $Q$ consecutive, identical, non-null blocks of length 
$M\geq 1$ from position $K+1$ on, such that $(Q-\frac{t(t+1)}{2})M\geq (t-1)K$.
Then $C$ is ultimately periodic, with a period $E$ of length $m$, 
a divisor of $M$. 
Hence $S$, when taken modulo $m$ is also ultimately periodic, 
with period length $p$ equal to the number of 1's in $E$, with preperiod
length $k$, where $k$ is at most the number of 1's among the first $K$
entries of $C$.
\end{lemma}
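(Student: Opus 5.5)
The plan is to prove by induction on $n > K+QM$ that $C[n]=C[n-M]$. The first $K+QM$ entries serve as the base of the induction, since the hypothesis gives the $Q$ identical blocks occupying positions $K+1,\dots,K+QM$. Write $P(n)$ for the statement that $C[j]=C[j-M]$ for all $K+M<j\le n$. Assume $P(n-1)$, with $n>K+QM$. Because $n$ and $n-M$ both exceed the initial values $s_1,\dots,s_t$ (which lie among the first $K$ positions, or else we enlarge $K$ harmlessly), membership of either number in $S$ is decided by the greedy rule: $z\notin S$ if and only if $z$ is a sum of $t$ distinct elements of $S$ smaller than $z$. So we must show that $n$ is such a $t$-sum exactly when $n-M$ is.

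\emph{From $n-M$ to $n$.} Suppose $n-M=a_1+\dots+a_t$ with $a_1<\dots<a_t$ in $S$. The largest summand satisfies $a_t\ge (n-M)/t$, and we want $a_t>K$. Then $a_t+M\le n-1$, so $P(n-1)$ gives $a_t+M\in S$, and $a_t+M$ is distinct from the other $a_i$ because it exceeds all of them. Replacing $a_t$ by $a_t+M$ represents $n$.

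\emph{From $n$ to $n-M$.} Suppose $n=b_1+\dots+b_t$ with distinct $b_i\in S$. Lowering $b_t$ by $M$ may collide with another summand, so we use a chain argument. Let $r\ge 0$ be maximal such that $b_t,b_t-M,\dots,b_t-rM$ all occur among the $b_i$; clearly $r\le t-1$. Replace the summand $b_t-rM$ by $b_t-(r+1)M$. This lowers the sum by exactly $M$ and keeps all summands distinct. It remains to check $b_t-(r+1)M\in S$. Since $b_t-rM\in S$, this follows from $P(n-1)$, or from the $Q$ repeated blocks, provided $b_t-rM>K+M$. A sufficient condition is $b_t>K+tM$.

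The main obstacle is the inequality bookkeeping: we need $b_t>K+tM$ (and similarly $a_t>K$) from $n>K+QM$, and this is where the hypothesis $(Q-\tfrac{t(t+1)}{2})M\ge(t-1)K$ must be spent. The crude bound $b_t\ge n/t$ loses too much. Instead I would exploit distinctness inside the residue-class chain: the summands $b_t,b_t-M,\dots$ in the chain and the remaining ones are all distinct, so the sum is at most roughly $tb_t$ minus a triangular correction. Combined with $n>K+QM$, this should give $t\,b_t\ge K+QM-\binom{t}{2}M$-type terms. Rearranging then yields exactly $b_t>K+tM$ under the stated condition, with $\tfrac{t(t+1)}{2}=\binom{t}{2}+t$ accounting for the chain length plus the triangular correction. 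I would check this carefully, possibly splitting according to whether $b_t\le K+tM$. If the estimate fails, the fallback is to argue by contradiction: if $b_t\le K+tM$, then all summands are at most $K+tM$, so $n\le t(K+tM)-\tfrac{t(t-1)}{2}$, and this contradicts the hypothesis. The non-null hypothesis is used to guarantee that the period contains ones, so that $S$ is infinite and the residues make sense.

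Once $C$ is periodic with period $M$ from position $K+1$, its minimal period $m$ divides $M$. The statements about $S$ modulo $m$ then follow directly. The period length $p$ equals the number of $1$'s in one period block $E$. The preperiod length $k$ is at most the number of elements of $S$ among the first $K$ positions, because every element beyond position $K$ recurs in steps of $m$.
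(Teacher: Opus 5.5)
Your architecture matches the paper's. The induction on $n$ is its minimal counterexample $v$, raising the largest summand by $M$ is its case $C_v=1$, and lowering a summand by $M$ is its case $C_v=0$. The bound $a_t>K$ is indeed easy: $(n-M)/t>K$ follows from $(Q-1)M\ge(t-1)K$, which the hypothesis implies.

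The gap is in the quantitative step of the second direction. The target $b_t>K+tM$ cannot be derived from $n>K+QM$ and the hypothesis. Take $t=3$, $K=0$, $Q=6$ (allowed, since the hypothesis then reads $0\ge 0$) and $n=6M+1$. The distinct numbers $2M-1,2M,2M+2$ sum to $n$, yet the largest is $2M+2<3M$ for $M\ge 3$. Your fallback fails for the same reason: there $n=6M+1\le 3(3M)-3$ for every $M\ge 2$, so no contradiction arises. In general, $n\le t(K+tM)-\binom{t}{2}$ clashes with $n>K+QM$ only if roughly $(Q-t^2)M\ge(t-1)K$, which is stronger than the hypothesis by $\binom{t}{2}M$. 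So the step where $\frac{t(t+1)}{2}$ is supposed to be spent does not go through as written.

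The missing idea is that you need no uniform lower bound on $b_t$. You only need the bottom of the \emph{actual} chain to lie in the periodic region, $b_t-rM>K+M$. The triangular correction depends on $r$ and enters with a plus sign, $t\,b_t\ge n+M\binom{r+1}{2}$. Suppose $b_t-rM\le K+M$. Then $b_t\le K+(r+1)M$, the $r+1$ chain summands add up to $(r+1)b_t-M\binom{r+1}{2}$, and the remaining $t-r-1$ summands are each below $b_t$, so
\[
n\le t\,b_t-M\tbinom{r+1}{2}\le tK+M\Big(t(r+1)-\tbinom{r+1}{2}\Big)\le tK+M\,\frac{t(t+1)}{2}\le K+QM.
\]
Here $t(r+1)-\binom{r+1}{2}$ is increasing for $0\le r\le t-1$ and equals $\frac{t(t+1)}{2}$ at $r=t-1$. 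This contradicts $n>K+QM$. It is precisely the paper's argument: if no summand can be moved down by $M$ to an element of $S$ outside $\mathcal{R}$, then every summand beyond position $K+M$ has its $M$-predecessor in $\mathcal{R}$. So $\mathcal{R}$ is a union of $M$-chains with bottoms at most $K+M$, and $v\le\sum_{i=1}^t(K+iM)$.

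A minor point: enlarging $K$ is not harmless, since it changes the hypothesis, but it is also unnecessary. Positions up to $s_t$ contain only $t$ ones, while the first $Q-1\ge t$ blocks each contain a one, so $s_t\le K+(Q-1)M<n-M$. This is a place where non-nullity genuinely matters.
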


\begin{proof}
Suppose that the hypotheses of the Lemma are satisfied, but $C$ is not ultimately
periodic. Let $v$ the smallest index indicating non-periodicity, that is,
$v$ is the least index beyond the $Q$ repeating blocks such that $C_v\neq C_{v-M}=C_{v-2M}=\cdots=C_{v-QM}$; then $QM+K+1\leq v< (Q+1)M+K+1$. Note that $Q\geq \frac{t(t+1)}{2}$.

If $C_v=1$, then $v\in S$ and by hypothesis $v-M\notin S$,
which means that there exist $0<q_1<q_2<\cdots <q_t$ in $S$ such that
$v-M=q_1+q_2+\cdots+q_t$. Clearly $q_t<v-M$ (since $t>1$), so $q_t+M<v$ and 
$q_t+M\in S$ by the minimality assumption on $v$.
This implies that $v=q_1+q_2+\cdots+(q_t+M)$ is a sum of $t$ distinct elements 
of $S$, hence not in $S$, contradicting $C_v=1$. 

If $C_v=0$, then $v\notin S$ and
$v-M, v-2M, \ldots, v-QM$ are all in $S$. In particular, there exist $r_1<r_2<\cdots<r_t\in S$ such that $v=r_1+r_2+\cdots+r_t$.
Let $\mathcal{R}:=\left\{r_1,r_2,\ldots,r_t\right\}$. 
If there exists $r_i\in\mathcal{R}$ such that $r_i-M\in S$ and 
$r_i-M\notin\mathcal{R}$ then
$v-M = r_1 + \cdots + (r_i - M) + \cdots r_t$ is a sum of $t$ distinct
elements of $S$, contradicting $C_{v-M}=1$. The largest possible sum of elements in
$\mathcal{R}$ such that for all
$r\in \mathcal{R}$ we have $r\in S$ and $r-M\in\mathcal{R}$ or $r-M \notin S$, 
occurs when the $t$ elements of $\mathcal{R}$ are in the
first $t$ of the repeating blocks. Then
\[
v = \sum_{i=1}^{t} r_i \leq \sum_{i=1}^t (K + iM) = Kt + M\frac{t(t+1)}{2}\leq QM + K < v
\]
a contradiction.

So neither $C_v = 1$, nor $C_v = 0$ can occur, which means that $C$ must be
ultimately periodic. Clearly, the smallest period length divides the block length $M$, and smallest preperiod of $C$ is not larger than $K$.
\end{proof}

\begin{corollary}\label{corollary}
Let $S$ be a $3$-sumfree sequence.
Suppose that the initial segment of length $9M$ of
the characteristic sequence $C$ of a 3-sumfree sequence $S$
has the property that the 9 blocks of length $M$ of which it is composed
are, with the possible exception of the first block, all identical and non-null,
then $C$ and hence $S$ is ultimately periodic.
\end{corollary}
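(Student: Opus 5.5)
The plan is to apply Lemma~\ref{lemma} directly with $t=3$, so that $\frac{t(t+1)}{2}=6$ and $t-1=2$. The condition in the lemma then reads $(Q-6)M\geq 2K$. The initial segment of length $9M$ is split into the first block (positions $1$ to $M$) and the remaining eight blocks (positions $M+1$ to $9M$). These last eight blocks are identical and non-null by hypothesis.

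The natural choice is $K=M$ and $Q=8$. The hypothesis of Lemma~\ref{lemma} is that $C$ contains $Q$ consecutive identical non-null blocks of length $M$ starting at position $K+1=M+1$, and this holds. The numerical condition becomes $(8-6)M=2M\geq 2K=2M$, which holds with equality. The lemma then gives that $C$ is ultimately periodic with period dividing $M$. It also gives that $S$ is ultimately periodic modulo the period length, with preperiod at most the number of $1$'s in the first block. This is exactly the conclusion of the corollary, interpreted in the sense fixed in the introduction.

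The only point requiring care is the case where the first block coincides with the others. Then one could take $K=0$ and $Q=9$ instead, but this is unnecessary, since the choice $K=M$, $Q=8$ already works regardless of the first block. Two smaller checks remain. First, the lemma's proof uses $t>1$, which holds since $t=3$. Second, "identical" must mean equal as $0/1$ words of length $M$, matching the lemma's hypothesis. With equality in the inequality being allowed, there is no real obstacle: the corollary is just the instance $t=3$, $K=M$, $Q=8$ of the lemma.
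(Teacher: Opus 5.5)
Your proposal is correct and is essentially the paper's own argument: apply Lemma~\ref{lemma} with $t=3$, $Q=8$ and $K=M$ (the paper phrases this as $M\geq K$). The required inequality $(Q-6)M\geq 2K$ then reduces to $2M\geq 2M$, and your remark that $K=0$, $Q=9$ is not needed when the first block matches the others is consistent with this.
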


\begin{proof}
Choose $M$ such that $M\geq K$.
Let $t=3$ and choose $Q=8$; if $M\geq K$ we find from Lemma \ref{lemma}
that since $(Q-\frac{t(t+1)}{2})M=2M\geq(t-1)K$, the result follows.
\end{proof}

\begin{remarks}\rm 
If sums of elements of $S$ with multiplicities greater than 1 are also disallowed
in $S$, (that is, if we would drop the strictness of $S$) then the condition on $Q$ in Lemma \ref{lemma} could be relaxed to
$(Q-t)M\geq (t-1)K$. Then 6 blocks rather than 9 suffice in Corollary \ref{corollary}.

It is not difficult to translate the results in this section to use the first
difference sequence $D$ rather than the characteristic sequence $C$. 
It has the advantage that the period lengths of $D$ and $S$ are the same.

\end{remarks}
\section{Algorithmic remarks}\label{sec:algo}
This brief section contains some remarks on (our implementations of)
algorithms to compute sumfree sequences. We did only serious computations
for $t=2$ and $t=3$, mostly using Magma \cite{magma} and Rust \cite{rust}.

There are two issues that deserve some discussion: how to compute
large initial segments of $t$-sumfree sequences, and how to determine
periodicity.

%\subsection{Computing $s_{n+1}$}
\subsection{Computing the next term}
The most straightforward way to compute $s_{n+1}$, for some $n\geq t$,
once $s_1, s_2, .\cdots, s_n$ have been determined, is to check $s_n+1, s_n+2,\cdots$ in order to see if they can be written
as a sum of $t$ previous elements.

An algorithm to visit each possible combination of $t$ out of $n$ elements
in lexicographical order is not so difficult to devise (compare
\cite{knuth}, Algorithm 7.2.1.3T).

However, we would like a slight variation of the above algorithm, visiting
combinations in order of smallest sum. With the aid of a \emph{heap} data
structure (\cite{knuth}, 5.2.3), it is not difficult to retrieve the combination with
the smallest sum, using that $S$ is strictly increasing.
Here the heap is a priority queue that pops the element with the 
smallest corresponding sum.

To determine the the $n$th element of a $t$-sumfree sequence one has to 
examine at most $\binom{n}{t}$ combinations.
As soon as the next element $s_{n+1}$ is found, it should of course
also be included in the list of possible summands for $s_{n+2}$.

An implementation of a variant of the above algorithm can be found in
\url{https://github.com/fifth-postulate/subsumfree}.

In implementing the suggested algorithm, some optimizations (depending
on $t$) may immediately become apparent; for example, since quickly
$t$ will become small relative to the size of the sumfree sequence, 
it can be beneficial to record the indices of the combinations, 
allowing for a sparse representation.

Alternatively, one could use {\it sieving}:
one would visit all $t$-combinations and remove those from a prepared
list of candidates. This works well if trying to obtain all entries up
to a given bound, but it is not so easy to predict how many terms will
be found. We were often interested in an initial segment of a given
length (for example, the first 100000 terms) and for that purpose we
used a reasonable guess for the expected density of the sequence, and
hence for the bound up to which we would like to compute, by quickly
generating a short initial sement (say up to bound a 1000 or so).

Since here the same set of combinations is visited as before, there is no immediate
shortcut. But it is possible to cache partial sums and update 
them whenever a new element of the sequence is determined. 
This prevents multiple recalculations of the partial sums,
and may offer a substantial gain. We used this (storing sums of pairs)
when computing large initial segments of $3$-sumfree sequences.
\subsection{Determining periods}\label{subsec:periods}
To prevent computing unnecessary large initial segments, it is good to use
Lemma \ref{lemma} to check if period and preperiod can be obtained from
a given initial segment. We explain how we used the lemma in the case of 
$t=3$.

Here is the general idea. We attempt to use Corollary \ref{corollary}.
Suppose we have computed the first $n$ terms, where for simplicity of
exposition we assume that $n$ is a ninefold $n=9h$. Now check if
blocks 2 to 9 are identical (and contain at least one 1): for $u=2$ to 
9 compare the blocks $s_{(u-1)h+1}, s_{(u-1)h+2}, \cdots, s_{uh}$. If
they are all the same, we know that we have found (a multiple of) the
the period of $S$, and that the preperiod is entirely contained in the
first block. 
The next step is to see where the first period of length $h$ starts,
that is, which forward shift still works,
by checking if $s_{h}=s_{2h}$, $s_{h-1}=s_{2h-1}$, $\cdots$ until the
first $j$ is found where $s_{h-j}\neq s_{2h-j}$. Finally, one determines
the proper (minimal) period length by considering blocks starting at
$s_{h-j+1}$ of length the divisors of $h$, since we now know that 
blocks of length $h$ do repeat infinitely often.

In principle one could perform these checks whenever $n$, the length
of the computed initial segment, reaches a new multiple of 9. In practice,
essentially because of the expensiveness of the computing more terms and
the efficiency in caching partial sums, we computed large initial segments
before executing a period check. In that case it is useful not just to
check $n/9$ as a possible period, but also smaller values: take 8 blocks of
the same length $\ell$ from the end of the computed segment (containing at least one
1) and check if they are identical. If not, increment $\ell$ and repeat
until repetition is established or the maximum size $\ell=h/9$ is reached.
It may also be useful to check potential period lengths {\it larger}
than $h/9$, but if a candidate period is found one will have to compute
more terms to {\it prove} periodicity.

%\section{The case $t=2$}\label{sec:t=2}
\section{2-sumfree sequences}\label{sec:t=2}
It will be clear that $t$-sumfree sequences are uninteresting for $t=1$: under our definition only the sequences $n, n+1, n+2, n+3,\cdots$, for any $n\geq 1$ qualify.

The case $t=2$ is more interesting, and shows phenomena also appearing for $t=3$.
It seems logical to consider first the sequences $S_{1,g}$ where the parameter $g$ is $\geq 2$; we will see though, that this may not be the most convenient parametrization.
In any case, $g$ even or odd induces different behaviour: if $g$ is odd $S_{1,g}$ only contains odd integers.
\begin{theorem}\label{thm:k2odd}
For any odd $g$ the greedy $2$-sumfree sequence $S_{1,g}$ is given by
$$z\in S_{1,g}\iff z=1\textrm{\ \ or\ \ }z\geq g \textrm{\ and odd}.$$
Thus, $S_{1,2e+1}$ is ultimately periodic modulo 2, with preperiod length 1 and period length 1, except for $S_{1,3}$ which is periodic from the beginning.
\end{theorem}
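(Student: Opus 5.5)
I would prove the characterization by strong induction on $z$. Let $T=\{1\}\cup\{z\geq g: z \text{ odd}\}$ be the claimed set. I will show that for every $z\geq 1$ we have $z\in S_{1,g}$ if and only if $z\in T$. By definition the sequence starts with $s_1=1<s_2=g$, and since both are odd this agrees with $T$ on $\{1,\dots,g\}$. Every $z$ with $1<z<g$ is excluded, because the sequence is increasing and $s_2=g$ immediately follows $s_1=1$. So only the range $z>g$ needs an argument. There, $z$ belongs to $S_{1,g}$ exactly when $z$ is not the sum of two distinct smaller entries; greediness means every integer larger than the last entry is tested in turn.

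Now assume that membership for all integers below $z$ agrees with $T$, where $z>g$.
\begin{enumerate}
\item If $z$ is even, I need two distinct odd entries below $z$ summing to $z$. If $z=g+1$, take $1+g$. If $z\geq g+3$, then $z-1\geq g+2$ is odd and lies in $T$, and it differs from $1$, so $z=1+(z-1)$. Either way $z\notin S_{1,g}$.
\item If $z$ is odd, every element of $T$ is odd, so by the induction hypothesis every earlier entry is odd. A sum of two odd numbers is even, so $z$ is not such a sum. Hence $z$ is the next candidate and $z\in S_{1,g}$.
\end{enumerate}
This settles the characterization. The only point needing care is the boundary value $z=g+1$, which uses the pair $1,g$ rather than $1,z-1$.

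For the periodicity statement, note that the characteristic sequence is $1,0^{g-2},\overline{1,0}$ for $g\geq5$. Read modulo $2$, $S_{1,g}$ is therefore $1,\overline{1}$ in terms of residues, giving preperiod length $1$ (the element $1$, which is followed by a gap) and period length $1$. When $g=3$, the gap $0^{g-2}$ is a single $0$, so the characteristic sequence is $\overline{1,0}$ from the start and the sequence is purely periodic. This is the stated exception.

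I do not expect a real obstacle. The main thing to handle carefully is the bookkeeping of what "preperiod length" means in the paper's convention (number of ones before the period starts), so that the case $g=3$ is correctly identified as the only case without preperiod.
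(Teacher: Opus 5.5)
Your proposal is correct and is the paper's argument: the paper checks $g+1=1+g$, notes that $g+2$ is unrepresentable and that $g+3=1+(g+2)$, and then says ``Etcetera''; your strong induction with the parity observation (all earlier entries are odd, so an odd $z$ is never a sum of two of them) is the rigorous form of that ``Etcetera''. One phrase to tighten is ``$1,\overline{1}$ in terms of residues'': literally every residue is $1$, and in the paper's convention the point is that from $g$ on $S_{1,g}$ is the full class $g+2j$, while $1$ lies outside the periodic part because $3\notin S_{1,g}$ for $g\ge 5$.
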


\begin{proof}
Let $g=2e+1$. Clearly $g+1$ is the sum of two
entries of $S_{1,g}$, hence not in it, while $g+2$ is not representable,
and thus forms the next entry. But then $g+3=1+(g+2)$ is representable again, and
$g+4$ is not. Etcetera: $S_{1,g}=1,g,g+2, g+4,g+6, \cdots$.
\end{proof}
The situation for even $g$ is slightly more complicated, and is most easily 
described using the sequence of differences. 
\begin{theorem}
Let $g=2e$ be even. For $g\geq 8$ the greedy 2-sumfree sequence $S_{1, g}$ is
characterized as follows:
\begin{eqnarray*}
z\in S_{1,g}&\iff& z\in\{1, g,g+2,g+4,\cdots,2g,2g+3\}\textrm{\ \ or\ \ }z>2g+4 \textrm{\ and\ }\\
&& z\equiv\{2g+5, 2g+7, 2g+9,\cdots, 3g+1, 4g, 4g+2, 4g+4,\\
&&\phantom{z\equiv}4g+6, 5g+3, 5g+5, 5g+7,\cdots, 6g+1\}\bmod 4g+3.\end{eqnarray*}
In particular, for $g\geq 8$ after the first $e+3$ entries (in a preperiod) the sequence $S_{1,g}$ modulo $4g+3$ is periodic with period length $g+3$.
\end{theorem}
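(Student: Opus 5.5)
The plan is to verify the stated description directly. Write $T$ for the set on the right-hand side, and $m=4g+3$. Reducing the listed representatives modulo $m$, the periodic residue set is
$$R=\{2g+5,2g+7,\dots,3g+1\}\cup\{-3,-1,1,3\}\cup\{g,g+2,\dots,2g-2\}\pmod m,$$
because $4g+2j\equiv 2j-3$ and $5g+3+2i\equiv g+2i$. These three blocks have $e-1$, $4$ and $e$ elements, which gives the claimed period length $g+3$. The preperiod $\{1,g,g+2,\dots,2g,2g+3\}$ has $1+(e+1)+1=e+3$ elements, matching the claimed $e+3$ (with $2g+3$ the last entry before the periodic regime). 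So the counting statements follow once $S_{1,g}=T$ is established.

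To prove $S_{1,g}=T$ I would argue by strong induction on $z$ that $z\in S_{1,g}\iff z\in T$. By the greedy definition it suffices to prove two facts for every $z>g$.
\begin{itemize}
\item[(a)] If $z\in T$, then $z$ is not $a+b$ with $a<b<z$ and $a,b\in T$.
\item[(b)] If $z\notin T$, then $z=a+b$ with $a<b$ and $a,b\in T$.
\end{itemize}
The range $z\le 2g+4$ is checked by hand. Up to $2g+4$, the sums of two distinct elements are $1+\text{even}$ (odd numbers from $g+1$ to $2g+1$) and $\text{even}+\text{even}\ge 2g+2$. This shows that the even numbers $g,\dots,2g$ and then $2g+3$ survive, while $2g+1$, $2g+2$ and $2g+4$ are sums.

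For $z>2g+4$ I would work with residues modulo $m$. For (a), I would compute the sumset $R+R$ modulo $m$ together with the sums $\{1,g,\dots,2g,2g+3\}+R$ and the sums of two preperiod elements. The claim to establish is that none of these meets $R$, except for sums that are too small to count: any representation $z=a+b$ lands in the right residue class only when $z$ is below $2g+5$. A parity bookkeeping makes this tractable. The blocks $\{2g+5,\dots,3g+1\}$ and $\{5g+3,\dots\}$ consist of odd integers, while the middle block $4g,\dots,4g+6$ and the preperiod evens are even, and each block is a short arithmetic progression of difference $2$. Hence each pairwise sumset is again an arithmetic progression of difference $2$ with explicitly computable endpoints modulo $m$. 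The hypothesis $g\ge 8$ should enter exactly here, guaranteeing that these endpoint intervals do not wrap around onto $R$.

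For (b), I would go through the complementary residues $r\notin R$ and exhibit, uniformly in the representative $z\equiv r$ with $z>2g+4$, two distinct elements of $T$ summing to $z$. Natural choices are $1+(z-1)$ when $z-1\in T$; $g+(z-g)$ or $(g+2i)+(z-g-2i)$ using a preperiod even; or two elements of the periodic part with the larger chosen as $z$ minus a fixed small element of $T$. The main obstacle is this step. One has to cover every non-member residue class, handle the first one or two periods where the needed summand might itself be a preperiod element or equal to the other summand (strictness), and check that the chosen summands are genuinely smaller than $z$ and lie in $T$ by the induction hypothesis. I would first try to express each missing class as (preperiod even) $+$ (periodic element), since the preperiod evens $g,\dots,2g$ form a long progression. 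Only the leftover classes near the block boundaries, such as $2g+1$, $2g+3$ and $4g+1$ modulo $m$, would then need ad hoc representations. These boundary cases are also where small $g$ (below $8$) presumably fails, which confirms that the hypothesis is used.

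Once (a) and (b) hold, the greedy rule forces $S_{1,g}=T$, and the periodicity statement follows. Alternatively, once the first few periods are verified symbolically, Lemma~\ref{lemma} could be invoked with $M=m$ and $K=2g+4$. However, I expect the direct induction above to be cleaner than trying to certify nine symbolic blocks.
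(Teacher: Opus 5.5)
The paper states this theorem without proof. Your plan is the same residue-class verification the paper carries out for Theorem~\ref{thm:2subs}: strong induction reduces $S_{1,g}=T$ to (a) and (b), which are then checked on residues modulo $m=4g+3$. It does go through, and your counts $e+3$ and $g+3$ are right. You only forecast the two checks, however, and some forecasts are off.

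For (a), write $A=\{2g+5,\dots,3g+1\}$, $B=\{-3,-1,1,3\}$ and $C=\{g,\dots,2g-2\}$, all with step $2$. Modulo $m$ one finds $A+A=\{7,\dots,2g-1\}$, $A+B=\{2g+2,\dots,3g+4\}$, $A+C=\{3g+5,\dots,4g+1\}\cup\{0,\dots,g-4\}$, $B+B=\{0,\pm2,\pm4,\pm6\}$, $B+C=\{g-3,\dots,2g+1\}$ and $C+C=\{2g,\dots,4g-4\}$.

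The two non-periodic elements contribute as follows. The sums $2g+A$, $2g+B$, $2g+C$, $2g+(2g+3)$ give $\{2,\dots,g-2\}$, $\{2g-3,\dots,2g+3\}$, $\{3g,\dots,4g-2\}$, $\{0\}$. The sums $(2g+3)+A$, $(2g+3)+B$, $(2g+3)+C$ give $\{5,\dots,g+1\}$, $\{2g,\dots,2g+6\}$, $\{3g+3,\dots,4g+1\}$.

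None of these meets $R$, so (a) has no ``too small'' exceptions at all. The hypothesis $g\ge 8$ is needed only through $6<g$ and $g-3>3$, not through any wrap-around; for $g=6$ one indeed gets $4g+6=g+4g$. Strictness is essential here, because $2g+2g=4g$ and $(2g+3)+(2g+3)=4g+6$ both lie in $T$.

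For (b), the classes $2g+1$, $2g+3$, $4g+1$ you single out are harmless: they are $2g+(1+jm)$, $2g+(3+jm)$ and $1+(4g+jm)$. The classes that are \emph{not} of the form (preperiod even)$+$(periodic element) are instead $3g+3$ and the odd residues $5,\dots,g-5$. The class $3g+3$ is reachable only as $(2g+3)+(g+jm)$. So the non-periodic element $2g+3$ is an indispensable summand in every period, and your list of natural choices omits it. Similarly, $2g$ is forced for the class $g-2$ once $g\ge 10$. Taking the fixed summand from $\{1,g,g+2,2g,2g+3\}$ and the other summand in the appropriate class of $T$ covers every non-member class above $g$ uniformly in $j$, which completes (b).
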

\begin{example}\rm
Consider the case $g=12$:
\begin{eqnarray*}
S_{1,12}&=&(1, 12, 14, 16, 18, 20, 22, 24, 27, 29, 31, 33, 35, 37, 48, 50, 52, 54, 63, 65,\\
&&67, 69, 71, 73, 80, 82, 84, 86, 88, 99,101, 103, 105, 114, 116, 118, 120,\cdots).
\end{eqnarray*}
The preperiod is $1,12,14,16,18,20,22,24,27$ and after that we find a block
$$29, 31, 33, 35, 37, 48, 50, 52, 54, 63, 65, 67, 69, 71, 73$$ of length
15, with differences
$$2,2,2,2,11,2,2,2,9,2,2,2,2,2,7$$
where the last difference takes us to the beginning of the next block
$$80, 82, 84, 86, 88, 99, 101, 103, 105, 114, 116, 118, 120, 122, 
124$$
with the same differences. It is the repetitiveness of this difference
sequence that forms the true periodicity. Note that the modulus of repetition 
is simply the sum (51) of the 15 differences in each block.
\end{example}
\begin{remarks}\rm
The theorems describe infinite families of `similar' 
2-sumfree sequences, in 
a parametrized fashion that will recur for families with $t=3$ below as
well. Typically, there are finitely many `exceptions', cases not covered by the family, with
deviating (sometimes larger than expected) moduli and periods. For this family the exceptional cases $g=2, 4, 6$ give rather easy sequences:
$$S_{1,2}=(1,2,4,7,10,13,16,19,22,25,\cdots),$$
which contains $2$ and the single residue class $1\bmod 3$;
$$S_{1,4}=(1, 4, 6, 8, 11, 13, 16, 18, 23, 25, 28, 30, 35, 37, 40, \cdots)$$
with $8$ and the residue classes of $1, 4, 6, 11$ modulo $12$; and
$$S_{1,6}=1, 6, 8, 10, 12, 15, 17, 19, 24, 26, 28, 33, 35, 37, 42, 44, 46, 51, 53,\cdots$$
containing $12$ and the
complete residue classes $1, 6, 8\bmod 9$.
\end{remarks}
The next theorem describes another infinite family, which we mainly give for comparison with the
case $t=3$: it turns out that families of this kind, with fixed difference
between two parameters, are easier to describe explicitly.
\begin{theorem}\label{thm:2subs}
Let $d\geq 1$. For every $f\geq d+1$ the 2-sumfree sequence $S_{f,f+d}$ is
characterized as follows:
\begin{eqnarray*}
z\in S_{f,f+d}&\iff& z\in\{f,2f+d-1\}\textrm{\ \ or\ \ }z>f+d-1 \textrm{\ and\ }\\
&& z\equiv\{f+d-1, f+d,\cdots, 2f+d-2\}\bmod 3f+d-1.
\end{eqnarray*}
In particular, after the first $f+1$ entries (in a preperiod) the sequence $S_{f,f+d}$ modulo $3f+d-1$ is periodic with period length $f$.
\end{theorem}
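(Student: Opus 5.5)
The plan is to prove the characterization by strong induction on $z$. Write $m=3f+d-1$ and $T$ for the set on the right-hand side, and show that $z\in S_{f,f+d}$ if and only if $z\in T$. Since $S_{f,f+d}$ is determined greedily, it suffices to check two things. First, every $z\in T$ with $z>f+d$ is not a sum of two distinct smaller elements of $T$. Second, every $z\notin T$ with $z>f+d$ is such a sum. The final assertion then follows: the $f$ residue classes give period length $f$ modulo $m$, and the preperiod consists of $f$, $2f+d-1$ and the first block $f+d,\dots,2f+d-2$, which is $f+1$ entries.

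I would organize everything by residues modulo $m$. Let $A=\{f+d-1,\dots,2f+d-2\}$ be the residue window of length $f$, and write the elements of $T$ as the two special elements $f,\;2f+d-1$ together with the blocks $I_j=[f+d-1+jm,\,2f+d-2+jm]$ (with $f+d-1$ removed from $I_0$). The residues of the possible pair sums are as follows.
\begin{itemize}
\item $I_i+I_j$ has residues in $[2f+2d-2,\,4f+2d-4]$. Modulo $m$ this is $[2f+2d-2,\,3f+d-2]\cup[0,\,f+d-3]$.
\item $f+I_j$ has residues in $[2f+d-1,\,3f+d-2]$.
\item $(2f+d-1)+I_j$ has residues in $[d-1,\,f+d-2]$.
\item $f+(2f+d-1)\equiv 0$.
\end{itemize}
None of these meets $A$, which gives the first half: elements of $T$ are never representable. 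The same list also shows that every residue outside $A$ is hit by some family. The residues $2f+d-1,\dots,2f+2d-3$ are the ones missed by $I+I$, and they are covered by $f+I$. The residue $f+d-2$ is covered by $(2f+d-1)+I$.

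For the second half, take $z\notin T$ with $z>f+d$ and exhibit an explicit representation with two distinct summands below $z$. For large $z$ this is easy. The blocks have length $f\ge 2$, so a sum $a+b$ with $a,b$ in the same block can be realized with $a\neq b$, except at the extreme residues. Those extremes are exactly the ones covered alternatively by $f+I$ or $(2f+d-1)+I$, or by choosing $a$ and $b$ in different blocks. So for each residue class outside $A$ I would write down a concrete pair, for instance $z=(f+d-1+jm+u)+(f+d-1+im+v)$ with $u\neq v$ or $i\neq j$, and check that the summands exist in $T$ and are smaller than $z$.

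The main obstacle is the initial range, roughly $f+d<z<4f+2d$. Here the blocks $I_0$ and $I_1$ are not yet fully available, $f+d-1$ is missing from $I_0$, and distinctness bites. One must check by hand several things.
\begin{itemize}
\item $2f+d-1$ is not representable; the only candidate sum is $f+(f+d-1)$, and $f+d-1\notin S$.
\item $2f+d,\dots,3f+d-2$ are covered by $f+I_0$.
\item The stretch up to $4f+2d-3$ is covered by $I_0+I_0$ with distinct summands starting at $(f+d)+(f+d+1)=2f+2d+1$, together with $(2f+d-1)+I_0$ and $f+(2f+d-1)=m$.
\end{itemize}
This is precisely where the hypothesis $f\ge d+1$ should enter, to ensure these families overlap without a hole near $3f+d-1$. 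I would first try the case $f\ge d+2$, where $2f+2d+1\le m$ makes the overlap immediate. I would then treat the boundary case $f=d+1$ separately, checking the few values near $m$ directly via $(2f+d-1)+I_0$ and $f+(2f+d-1)$.
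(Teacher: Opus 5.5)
Your proposal is correct and is essentially the paper's argument: with $T$ the right-hand side and $m=3f+d-1$, you show that two-term sums from $T$ never have residue in the window $\{f+d-1,\dots,2f+d-2\}$, and you cover the remaining $2f+d-1$ classes by explicit sums $f+(\cdot)$, $(2f+d-1)+(\cdot)$, or a sum of two block elements. Your $I_i+I_j$ computation also covers sums of two elements from the same residue class, which the paper's second half does not mention.

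Two small corrections are needed. First, for $d=1$ the reason $2f+d-1=2f$ is not a sum is that $f+f$ has equal summands; here $f+d-1=f$ \emph{is} in $S$. Second, no separate treatment of $f=d+1$ is needed: the values $3f+d,\dots,3f+2d-2$ come from $I_0+I_0$ (not from $(2f+d-1)+I_0$), and its smallest distinct sum $2f+2d+1$ is at most $m+1$ precisely when $f\ge d+1$.
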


\begin{proof}
Let $T$ be the infinite sequence defined by the right hand side of the
equivalence in the statement.
We first show that all elements of each of the remaining
residue classes modulo $m=3f+d-1$ is the sum of two elements in $T$.
Note that $T$ contains precisely $f$ residue classes modulo $m$, and we
consider the remaining $m-f=2f+d-1$ classes. By $\overline{z}=\overline{w}+y$ we indicate that each element in the residue class
of $z\bmod m$ can be written as a sum of $y$ and an element of $w\bmod m$.
\begin{itemize}
    \item[]
    \begin{itemize}
    \item[$2f+d-1$:] Note that $2f+d-1\in T$, and for $k>0$ both $f+d-1+k\cdot m$ and $f$ are in $T$, hence $2f+d-1+km$ is not;
    \item[$2f+d$\phantom{$-1$}:] $2f+d+k\cdot m=(f+d+k\cdot m)+f\notin T$;
    \item[$\vdots\qquad$]
    \item[$3f+d-2$:] $3f+d-2+k\cdot m=(2f+d-2+k\cdot m)+f\notin T$; 
    \item[$3f+d-1$:] Note that $3f+d-1=(2f+d-1)+f\notin T$, while for $k>0$ we have $3f+d-1+k\cdot m=(k+1)\cdot m=(f+d-1+k\cdot m)+2f\notin T$;
    \item[$3f+d \phantom{-1}$:] $3f+d+k\cdot m=(2f+d-1+k\cdot m)+f+d\notin T$;
    \item[$\vdots\qquad$]
    \item[$f+d-2$:] $f+d-2+k\cdot m=(2f+d-2+k\cdot m)+2f+d-1\notin T$;
\end{itemize}
\end{itemize}
Next we show that none of the elements of $T$ is itself 
the sum of two different elements of $T$. If $z_1$ and $z_2$ are contained in different full residue classes of $T$, then for the smallest
residue holds:
$$2f+d-2< 2f+2d-1\leq z_1+z_2\leq 4f+2d-5<f+d-1+m,$$
and hence the sum of the classes of $z_1$ and $z_2$ is outside $T$.
Similarly, for the sum of $z$ and one of the additional elements $f$ 
and $2f+d-1$, we find 
$$2f+d-1\leq z+f\leq 3f+d-2<m+f+d-1$$
and
$$2f+d-2<3f+2d-2\leq z+2f+d-1\leq 4f+2d-3<m+f+d-1$$
so these sums are not in $T$.
\end{proof}

\begin{example}\rm
For $f=4$ and $d=3$ the above theorem describes the sequence
$$S_{4,7}=(4,7,8, 9, 10, 20, 21, 22, 23, 34, 35, 36, 37, 48, 49, 50, \cdots)$$
as consisting of $4, 10$ and the residue classes of $6, 7, 8, 9$ modulo 14,
except that 6 itself is missing.
\end{example}

\begin{remarks}\rm
The case $f=1$ is done before, and is slightly deviating.

Theorem \ref{thm:2subs} solves, for fixed $d$, the periodicity problem
for all but finitely many $f$: only the cases $S_{f, f+d}$ for $1\leq f\leq d$ are not covered. In fact it is not very difficult to state the result
for $\frac{2}{3}$ of this remaining interval, although it looks slightly messy (even without being precise about the preperiod), and giving a clean uniform proof is hard. For that reason we have not included the details
here.
In the remaining cases, for $f\leq q=d/3$ the behaviour of
the period length is more erratic, but it is usually comparable
to $d$ in this range. This is not true for the length of the preperiod:
this may be in the order of a few thousands even for small $f, d$: the
sequence $S_{3, 98}$ has a preperiod of 2566 terms and period length 2.

For much more on the case $t=2$, see \cite{BB2}.
\end{remarks}
%\section{The case $t=3$}\label{sec:t=3}
\section{3-sumfree sequences}\label{sec:t=3}
In this section, the main one of this paper, we consider 3-sumfree sequences.
As noted, they are specified by the first 3 values, 
denoted by $f<g<h$ for $S_{f,g,h}$, but
we will find it convenient to consider families of these sequences with fixed
difference $d$ between $g$ and $h$. So we look at $S_{f,g,g+d}$.

Our first result considers the family with $f=1$ and $d=1$, and includes the sequence
$S_{1,2,3}$, which inspired this work. Again, we emphasize that our sumfree sequences are all greedy and strict.
\begin{theorem}\label{thm:k=1d=1}
For every $g\geq 2$ the 3-sumfree sequence $S_{1,g,g+1}$ is 
characterized as follows:
\begin{eqnarray*}
z\in S_{1,g,g+1}&&\iff\quad
z\in\{1, 2g + 1, 6g + 1\}\textrm{\ \ or\ \ }\\
&&z\equiv\{g, g + 1, \cdots , 2g\}\cup\{6g + 2, 6g + 3, \cdots , 7g + 1\}\bmod 10g+3.
\end{eqnarray*}
In particular, after the first $g + 4$ entries, the sequence modulo $10g + 3$ is periodic
with period $2g + 1$.
\end{theorem}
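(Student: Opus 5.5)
We follow the strategy of Theorem~\ref{thm:2subs}. Let $m=10g+3$ and let $T$ be the set described by the right-hand side of the equivalence. Write $A=\{g,\dots,2g\}$ and $B=\{6g+2,\dots,7g+1\}$ for the two blocks of residues, so $T$ consists of the three sporadic elements $1,2g+1,6g+1$ together with all positive integers congruent mod $m$ to an element of $A\cup B$. The sequence $S_{1,g,g+1}$ is the unique set that contains $1,g,g+1$, has no elements in $(1,g)$, and for which every $z>g+1$ satisfies: $z$ is in the set iff $z$ is not a sum of three distinct smaller elements of the set. By induction on $z$, it therefore suffices to verify two things for $T$. First, every $z>g+1$ with $z\notin T$ is a sum of three distinct elements of $T$. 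Second, no element of $T$ is such a sum. Since all summands of $z$ are automatically smaller than $z$, these two properties together identify $T$ with $S_{1,g,g+1}$.

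\textbf{Non-members are representable.} We work residue class by residue class, writing as in the proof of Theorem~\ref{thm:2subs} $\overline z=\overline w+y_1+y_2$ with $w$ in a full class and $y_1,y_2$ fixed small elements of $T$.
\begin{itemize}
\item For $2g+2\le z\le 3g+2$, write $z=1+g+(z-g-1)$, using two elements of $A$ and taking care of distinctness at the ends.
\item For the classes up to $6g$, use $1+(2g+1)+A$, sums $a+a'+1$ of elements of $A$, and sums $a+a'+a''$ (which range over $[3g+3,6g-3]$).
\item For the gap between $7g+2$ and $10g+2+g$, use combinations such as $B+1+(2g+1)$, $B+A$ plus small terms, or $(6g+1)+A+1$.
\item For lifted classes with $k\ge1$, replace one summand by its lift $w+km$.
\end{itemize}
The specific sporadic elements are explained as follows. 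The number $2g+1$ is free because $1+g+g$ is not strict and three elements of $A$ sum to at least $3g+3$. The number $6g+1$ is free because the largest strict triple of elements of $T$ below it is $2g+(2g-1)+(2g+1)=6g$, plus a small check. One also has to show that the first lifts $2g+1+m$ and $6g+1+m$ are representable, e.g.\ $2g+1+m=(g+1+m)+(g-1)+1$, taking care of the edge cases.

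\textbf{Members are not representable.} Classify triples of summands by which of $A$, $B$ or the sporadic set $\{1,2g+1,6g+1\}$ each summand lies in. For each pattern, reduce mod $m$ and bound the sum of the smallest representatives, as done in the proof of Theorem~\ref{thm:2subs}, showing the residue lands outside $A\cup B$. For instance:
\begin{itemize}
\item $A+A+A\subset[3g+3,6g-3]$, which avoids $A\cup B$.
\item $A+A+B\subset[8g+3,11g-1]$; reducing mod $m$, the part at least $m$ gives residues in $[0,g-4]$, and everything else lies in the gap.
\item $A+B+B$ and $B+B+B$ are checked similarly.
\item Patterns involving $1$, $2g+1$ and $6g+1$ each give intervals shifted by these constants.
\end{itemize}
Finally, the three sporadic elements themselves must be checked directly against small triples.

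The main obstacle is the bookkeeping at the interval endpoints, where strictness (distinct summands) matters and where a residue interval barely touches $A$ or $B$, e.g.\ $B+B+1$ or $A+A+(6g+1)$. I would first tabulate all ten multiset patterns of summand types with their exact sum intervals mod $m$, and verify each endpoint carefully. Small $g$ (say $g=2,3$), where intervals degenerate, would be checked by direct computation or by Corollary~\ref{corollary}. The preperiod and period statements then follow by counting: the preperiod consists of $1$, $g,\dots,2g$, $2g+1$ and $6g+1$, and one period contains $|A|+|B|=2g+1$ residues.
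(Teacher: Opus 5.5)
The paper states this theorem without proof. Its generalization, Theorem~\ref{thm:d=1} with $f=1$, is left to the reader, with the suggestion to trace the greedy construction block by block as in Theorem~\ref{thm:gend>1}.

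Your route is to fix the candidate $T$ and verify the two closure properties class by class, as in the proof of Theorem~\ref{thm:2subs}. That route is sound. The induction reducing $T=S_{1,g,g+1}$ to those two properties is correct. Carried out properly, it proves the statement uniformly for all $g\ge 2$, so no separate small-$g$ check is needed. Compared with block tracing, you need $T$ in advance, but you avoid the informal ``repeat ad infinitum'' step.

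Some of your concrete claims are wrong, though, and need fixing.
\begin{itemize}
\item Your sample $2g+1+m=(g+1+m)+(g-1)+1$ fails. For $g\ge 3$ we have $g-1\notin T$, and for $g=2$ the summand $1$ repeats. Use $2g+1+km=1+g+(g+km)$ and $6g+1+km=(2g+1)+2g+(2g+km)$ instead.
\item Your cover of $[2g+2,6g]$ stops short of $6g$; for $g\ge 3$ it misses $6g-2,6g-1,6g$. Add $(2g+1)+a+a'$, which covers $[4g+2,6g]$.
\end{itemize}

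In the second half you compute residue intervals as if the three residues were distinct, e.g.\ $A+A+A\subset[3g+3,6g-3]$ and the wrap-around of $A+A+B$ inside $[0,g-4]$. But distinct elements of $T$ can share a residue ($g$, $g+m$, $g+2m$), so the correct ranges are:
\begin{itemize}
\item $[3g,6g]$ for $A+A+A$;
\item $[8g+2,10g+2]\cup[0,g-2]$ for $A+A+B$;
\item $[2g+1,4g+1]$ for $1+A+A$;
\item $[4g+1,6g+1]$ for $(2g+1)+A+A$.
\end{itemize}
These still avoid $A\cup B$. However, the last two, like $1+(6g+1)+B\to[2g+1,3g]$, hit the residues of the sporadic elements. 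That is harmless only because those two classes contain no element of $T$ other than $2g+1$ and $6g+1$ themselves. Indeed, the very sums $1+g+(g+km)$ and $(2g+1)+2g+(2g+km)$ are what exclude the lifts.

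So distinctness matters only for the actual unlifted values (showing that $2g+1$ and $6g+1$ are free), not in the residue bookkeeping. Accordingly, the tight patterns are not $B+B+1$ or $A+A+(6g+1)$. They are:
\begin{itemize}
\item $1+A+A$ and $(2g+1)+A+A$;
\item $1+(6g+1)+A\to[7g+2,8g+2]$;
\item $1+(6g+1)+B$;
\item $(2g+1)+A+B\to[9g+3,10g+2]\cup[0,g-1]$.
\end{itemize}
With $1$, $2g+1$, $6g+1$ treated as separate types there are twenty patterns, and with these corrections all of them pass.
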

\begin{example}\rm
The original sequence ($g=2$), according to this theorem, is then 
$$S_{1,2,3}=(1,2,3,4,5,13, 14, 15, 25, 26, 27, 37, 38, 48, 49, 50, 60, 61, 71, 72, 73, 83, %84, 94, 95, 96
\cdots)$$
and will contain, besides $1,5,13$, all integers
that equal $2, 3, 4$ or $14, 15$ modulo 23.
\end{example}
The next theorem concerns a 2-parameter family for $f=1$; note that the previous 
theorem is {\it not} obtained by taking $d=1$ here.
\begin{theorem}\label{thm:f=1d>1}
Let $d\geq 2$. For every $g\geq d+1$ the greedy 3-sumfree sequence $S_{1,g,g+d}$ is
characterized as follows:
\begin{eqnarray*}
z\in S_{1,g,g+d}&\iff& z\in\{1,g,2g+d-1,2g+d\}\textrm{\ \ or\ \ }z\geq g+d \textrm{\ and\ }\\
&& z\equiv\{g+d-2, g+d-1,\cdots, 2g+d-2\}\bmod 5g+2d.\end{eqnarray*}
In particular, for $d\geq 2$ and every $g\geq d+1$ after the first $g+3$ entries (in a preperiod) the sequence $S_{1,g,g+d}$ modulo $5g+2d$ is periodic with period $g+1$.
\end{theorem}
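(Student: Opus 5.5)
The plan is to imitate the proof of Theorem~\ref{thm:2subs}. Let $m=5g+2d$ and let $T$ be the set on the right-hand side of the equivalence. The proof will be an induction on $z$ showing $z\in S_{1,g,g+d}\iff z\in T$. This follows from two facts:
\begin{itemize}
\item[(A)] every $z\notin T$ with $z>g+d$ is a sum of three distinct elements of $T$ smaller than $z$;
\item[(B)] no element of $T$ is a sum of three distinct elements of $T$.
\end{itemize}
Indeed, if $S$ and $T$ agree below $z$, then the greedy rule puts $z$ into $S$ exactly when $z$ is not such a $3$-sum, and by (A) and (B) this happens exactly when $z\in T$. The conditions $d\ge 2$ and $g\ge d+1$ will be used exactly where they are needed to separate residue intervals.

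First I will dispose of the initial segment by hand. The smallest $3$-sum available is $1+g+(g+d)=2g+d+1$. So every integer $g+d,\dots,2g+d$ enters $S$, and this accounts for the class representatives $g+d,\dots,2g+d-2$ and for the two extra elements $2g+d-1$ and $2g+d$. I must also confirm that the residue classes $g+d-2$ and $g+d-1$ first appear at $m+g+d-2$ and $m+g+d-1$ (the bound $z\ge g+d$ excludes their small representatives). This initial part fixes the preperiod of $g+3$ entries. After that the sequence should consist exactly of the $g+1$ residue classes $g+d-2,\dots,2g+d-2$ modulo $m$, which gives period $g+1$.

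For (B), I will write every element of $T$ as either one of the finitely many extras $1,g,2g+d-1,2g+d$ or a class element $c\equiv r$ with $r\in[g+d-2,2g+d-2]$. I will then bound the smallest residue of each type of triple:
\begin{itemize}
\item[(a)] three class elements: residues in $[3g+3d-6,\,6g+3d-6]$;
\item[(b)] two class elements plus $1$, $g$, $2g+d-1$ or $2g+d$;
\item[(c)] one class element plus two extras;
\item[(d)] three extras.
\end{itemize}
In each case I check that the interval of possible residues avoids both $[g+d-2,2g+d-2]$ and $[m+g+d-2,m+2g+d-2]$, and that the actual values avoid the extras. Most inequalities are immediate; for example $3g+3d-6>2g+d-2$ because $g+2d>4$, and $6g+3d-4<6g+3d-2=m+g+d-2$. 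Care is needed with the small representatives, whose residues $g+d-2$ and $g+d-1$ are absent from $T$.

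For (A) I will run through the $m-(g+1)=4g+2d-1$ remaining residues $r=2g+d-1,\dots,m+g+d-3$. For each I write down a representation in the style of the proof of Theorem~\ref{thm:2subs}, namely class element plus two fixed extras. The candidate families are:
\begin{itemize}
\item $c+1+g$, covering $[2g+d-1,\,3g+d-1]$;
\item $c+1+(2g+d-1)$ and $c+1+(2g+d)$, reaching up to $4g+2d-1$;
\item $c+g+(2g+d-1)$ and $c+g+(2g+d)$, reaching up to $5g+2d-2$;
\item for the gaps, two class elements plus $1$ (residues $[2g+2d-3,4g+2d-3]$) and two class elements plus $2g+d$, which wraps past $m$.
\end{itemize}
For each residue I also have to check that the three summands are distinct and smaller than $z$. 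I also have to check separately the finitely many small nonmembers, such as $2g+d+1,\dots$, and the small representatives of residues $g+d-2$ and $g+d-1$ (the latter via $1+g+(g+d-1+\cdots)$-type sums).

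The main obstacle will be this covering step near the wrap-around, i.e.\ the residues just below $g+d-2$, in particular $g+d-3\equiv 6g+3d-3$. Three class elements reach only $6g+3d-6$, and two class elements plus $2g+d$ reach only $6g+3d-4$. I will first try sums using class elements from different periods (adding multiples of $m$) combined with the extras. If a residue still resists, I will recheck the exact endpoints of the residue set against computed terms of $S_{1,g,g+d}$ for small $g,d$ before finishing the case analysis.

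As a fallback, for any fixed $(g,d)$ Lemma~\ref{lemma} (with $t=3$) reduces the claim to a finite check, but that does not give the uniform statement. So the explicit residue bookkeeping above is the real content.
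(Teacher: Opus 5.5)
You have the right strategy: guess $T$, prove (A) and (B), and induct on $z$. Part (B) goes through as you outline. The gap is in (A), exactly at the residue you flag.

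The families you list cover residue sums $2g+d-1,\dots,6g+3d-4$ and nothing beyond, so the class $g+d-3\equiv 6g+3d-3 \pmod{m}$ is never hit. Your proposed remedy cannot help, because taking class elements from other periods does not change residues. No triple of the types you list has residue sum $6g+3d-3$, and the residue set in the statement is not misstated.

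The missing family is one class element plus \emph{both} large extras, $c+(2g+d-1)+(2g+d)$. It covers residue sums $5g+3d-3,\dots,6g+3d-3$; in particular
\[
(k+1)m+g+d-3=(km+2g+d-2)+(2g+d-1)+(2g+d)\qquad(k\ge 0),
\]
with three distinct summands in $T$, all smaller than the left-hand side. Every other type of triple has residue sum in $[2g+d-1,6g+3d-4]$, so this is the \emph{only} way to exclude the class $g+d-3$. That is exactly why $2g+d-1$ and $2g+d$ must remain in $S$ as extras. In (B) this family is harmless: its residue sums reduce modulo $m$ into $[0,g+d-3]\cup\{m-1\}$, which is disjoint from $[g+d-2,2g+d-2]$.

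Two smaller points.

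First, in the first gap $2g+d+1\le z\le 6g+3d-3$ the elements of residue $g+d-2$ and $g+d-1$ do not yet exist, so you must cover it with $\{1,g\}\cup[g+d,2g+d]$ only. This works uniformly. With distinct $x,y,w\in[g+d,2g+d]$, the sums $1+g+x$, $1+x+y$, $g+x+y$ and $x+y+w$ fill the intervals $[2g+d+1,3g+d+1]$, $[2g+2d+2,4g+2d]$, $[3g+2d+1,5g+2d-1]$ and $[3g+3d+3,6g+3d-3]$. These chain together because $d\le g$ and $d+3\le 2g$. The top value is again the triple in the display above (the case $k=0$).

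Second, the small representatives $g+d-2$ and $g+d-1$ lie below $h=g+d$. They are absent from $S$ by the definition of the sequence, not because they are $3$-sums; no such sum exists, since every $3$-sum is at least $2g+d+1$. For $d=2$, note that $g+d-2=g$ is itself the extra $g$.

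For comparison, the paper gives no separate proof of this statement. It is the case $f=1$ of Theorem~\ref{thm:gend>1}, which is proved by following the greedy process block by block. The same triple appears there as the endpoint $B^1_{f+g-1}+B^1_{f+g}+B^k_{f+g}$ of each later block of omissions. Once the missing family is added, your closed-form verification is a complete argument, and more self-contained than the paper's informal ``repeat ad infinitum'' step.
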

In the following 2 theorems we specify the sequence $S_{f,g,h}$ by explicitly
giving its first difference sequence $D_{f,g,h}$, that is, $(S_{f,g,h}[i+1]-S_{f,g,h}[i])_{i=1}^\infty$. 
\begin{theorem}\label{thm:gend>1}
Let $d\geq 2$, and if $d=2$ assume $(f,g)\neq (1,2)$. Then for every $f\geq 1$ and every 
$g\geq f+1$ such that $f+g\geq d+1$
the greedy 3-sumfree sequence $S_{f,g,g+d}$ is ultimately periodic modulo $5g+2d+3(f-1)$, with preperiod length $g+f+2$ and period length $g+f$. More explicitly, $D_{f,g,g+d}$ is ultimately periodic in these cases, with preperiod
$$(g-f, d, 1^{f+g-1}, 4g+2f+2(d-2))$$
and period
$$(1^{f+g-1}, 4g+2f+2(d-1)).$$
\end{theorem}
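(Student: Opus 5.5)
The plan is to guess the whole set explicitly and then verify it against the greedy rule, as in the proof of Theorem~\ref{thm:2subs}. The parameter-dependent modulus makes a direct appeal to Lemma~\ref{lemma} impossible, since that lemma needs a finite computation for fixed $f,g,d$. Put $m=5g+2d+3(f-1)$ and $a=g+d$.

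Reading off the claimed difference sequence, the candidate is
\[
T=\{f,\,g\}\cup[a,\,a+f+g-1]\cup\{z\geq 6g+3f+3d-5:\ z\equiv g+d-2,\ldots,2g+d+f-3 \bmod m\}.
\]
The residue range is $(6g+3f+3d-5)-m=g+d-2$ through $2g+d+f-3$. So $T$ consists of $f$ and $g$, an initial run of $f+g$ consecutive integers $[g+d,\,2g+d+f-1]$, and from $6g+3f+3d-5$ on a union of $f+g$ consecutive residue classes modulo $m$. The periodic part is the initial run shifted down by $2$ and then up by $m$. The first step is to check that this $T$ reproduces the preperiod $(g-f,d,1^{f+g-1},4g+2f+2(d-2))$ and period $(1^{f+g-1},4g+2f+2(d-1))$. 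This is routine arithmetic.

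Next I show $T=S_{f,g,g+d}$ by induction on $z$. It suffices to prove two things.

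\textbf{(A)} Every $z\in T$ with $z>g+d$ is not a sum of three distinct smaller elements of $T$.

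\textbf{(B)} Every $z>g+d$ with $z\notin T$ is such a sum.

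For the initial run (A) is immediate. The smallest sum of three distinct elements is $f+g+(g+d)=f+2g+d$, which exceeds $2g+d+f-1$. For the periodic part I split by how many summands are special elements ($f$, $g$, or from the initial run) and how many are periodic ones. For each type I bound the residue of the sum modulo $m$. For example, three periodic summands have residues in $[3(g+d-2),\,3(2g+d+f-3)]$, which I reduce modulo $m$. I then check that each possible range misses the window $[g+d-2,\,2g+d+f-3]$ modulo $m$, or else falls below $6g+3f+3d-5$ in absolute size. The hypotheses $g\geq f+1$ and $f+g\geq d+1$ are exactly what should make these interval comparisons come out right. I expect a few boundary sums (e.g.\ $f+g+{}$periodic) to land just outside the window by $1$ or $2$. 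That is where the shift by $2$ between the initial run and the periodic block comes from.

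For (B) I go through the $m-(f+g)$ missing residues in consecutive stretches, as in the itemized list of Theorem~\ref{thm:2subs}. Each missing residue $z$ should be $w+x+y$ with $w$ running through the periodic block or the initial run, and $\{x,y\}$ a fixed pair. The natural pairs are $\{f,g\}$, $\{f,r\}$ and $\{g,r\}$ with $r\in[a,a+f+g-1]$, and pairs of two run elements. Their sums fill an interval of length about $2(f+g)$. Adding a block of $f+g$ consecutive values, these should cover the $4g+2f+2d-2$ missing residues. I also treat separately the non-members in the gap $[2g+d+f,\,6g+3f+3d-6]$ before periodicity starts, since there only initial-run elements are available as summands.

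The main obstacle is step (B): getting the covering of all missing residues without holes, uniformly in $f,g,d$, and making sure the three summands are distinct. Distinctness fails precisely when two summands would have to come from the same residue class at the same height. I expect the borderline case $f+g=d+1$ and the excluded triple $(f,g,d)=(1,2,2)$ to be where a hole appears. I would first do the covering symbolically for generic parameters, and then check the extreme cases $f+g=d+1$ and $g=f+1$ by hand, confirming (and, if needed, supplementing) the result with the finite check of Corollary~\ref{corollary} for small parameter values.
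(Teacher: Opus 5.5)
Your approach is correct, but it is organized differently from the paper's proof.

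\textbf{How the two routes differ.} The paper never fixes a candidate set in advance. It builds $S$ block by block:
\begin{itemize}
\item the run $[h,f+g+h-1]$;
\item the gap $[f+g+h,3(f+g+h)-6]$, filled by 3-sums of initial elements;
\item the next entry $3(f+g+h)-5$, as one more than the sum of the three largest elements so far;
\item each later gap, filled by a new block element plus two initial elements;
\end{itemize}
and then says ``repeat ad infinitum''. Your direction (B) uses exactly the paper's covering sums.

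Your direction (A), the residue case split by the number of periodic summands, is what the paper's repetition of Step 3 leaves implicit. From the second periodic block on, the three largest elements so far sum to far more than the next entry. So one really has to check that sums with two or three periodic summands miss the new block. Carried out, your version is the more complete proof and shows exactly where each hypothesis enters. The paper's construction has the advantage of explaining where $T$ comes from, in particular the shift by $2$: the first gap ends at the sum of the three largest run elements, the later gaps at a block element plus the two largest run elements.

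\textbf{Where the work actually is.} You misjudge where the difficulty lies, and some estimates are off. Put $I=\{f,g\}\cup[g+d,2g+d+f-1]$ and $[u,U]=[g+d-2,2g+d+f-3]$, and write each periodic summand as a multiple of $m$ plus an element of $[u,U]$.
\begin{itemize}
\item \emph{Step (B).} The sums of two distinct elements of $I$ are exactly $\{f+g\}\cup[f+g+d,4g+2f+2d-3]$. This set is contiguous after $f+g$ because $f+g\geq d+1$; it is not an interval of length about $2(f+g)$. Adding $[u,U]$, whose $f+g\geq d$ consecutive values bridge the hole of length $d-1$, gives precisely the offsets $[f+2g+d-2,\,6g+3f+3d-6]$. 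These are all $4g+2f+2d-3$ missing residues (not $4g+2f+2d-2$). So (B) is immediate. Distinctness is automatic, because the periodic summand exceeds every element of $I$.
\item \emph{Step (A).} One, two or three periodic summands give offsets in $[f+2g+d-2,\,6g+3f+3d-6]$, $[f+2g+2d-4,\,6g+3f+3d-7]$ and $[3g+3d-6,\,6g+3f+3d-9]$ respectively. All three ranges lie strictly between $U$ and $m+u=6g+3f+3d-5$. The middle range clears $U$ exactly because $d\geq 2$, which is why $d=1$ (Theorem~\ref{thm:d=1}) behaves differently.
\item \emph{Where a hole can appear.} The borderline $f+g=d+1$ produces no hole. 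A hole can only arise in the pre-periodic gap. There the 3-sums of run elements start at $3g+3d+3$, while all other 3-sums of $I$ fill exactly $[f+2g+d,\,5g+2f+2d-3]$. So a hole occurs precisely when $d>2(f+g)-5$, which under the hypotheses forces $(f,g,h)=(1,2,4)$.
\end{itemize}
The uniform interval check therefore settles every case. No appeal to Corollary~\ref{corollary} is needed, and it could not settle the infinite families $f+g=d+1$ or $g=f+1$ anyway.
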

\begin{proof}
We will sometimes use the notation $h=g+d$ for the third entry of the
sequence, and shorthand $S$ for $S_{f, g, h}$.

Step 1: the three initial entries $f, g, h=g+d$ in $S$
will be followed by a contiguous block of $f+g$ entries $g+d+1=h+1, \cdots, f+g+h-1$
because $f+g+h$ is the smallest sum possible. 

Step 2: after this there will be a contiguous block of integers
$f+g+h, \cdots, 3f+3g+3h-6$ {\it not} in $S$;
this will use the condition that $f+g\geq d+1$. If the condition is not satisfied there may be elements in this block that {\it do} belong to $S$.
It will be clear that $f+g+h$ is not in $S$, but by the previous step
 $h+1, h+2, \cdots, f+g+h-1$ are in $S$ like $h$ is, and therefore
$f+g+(h+1), \cdots, f+g+(f+g+h-1)=2(f+g)+h-1$ will not be in $S$. Likewise,
$f+h+h+1, f+h+h+2, \cdots, f+h+(f+g+h-1)=2(f+h)+g-1$ will not be in $S$.
This block will either overlap with, or follow directly after the previous
block (without holes) provided its first entry  is at most 1 more than the final entry in the first block: $f+2h+1\leq 2f+2g+h$, that is, $f+g\geq d+1$.

Now we can increment the second summand repeatedly, to obtain the
following blocks of elements not in $S$:
\begin{eqnarray*}
f+(h+1)+h+2, &\cdots&, f+(h+1)+f+g+h-1=2(f+h)+g\\
f+(h+2)+h+3, &\cdots&, f+(h+2)+f+g+h-1=2(f+h)+g+1\\
&\vdots&\\
& &f+(f+g+h-2)+f+g+h-1.
\end{eqnarray*}
This gives a contiguous block 
$$f+2h+3, \cdots, 2(f+g+h)+f-3$$
of integers not in $S$. This block clearly overlaps with the previous.

Next we for 3-sums in $S$ with $g$ instead of $f$ as a first summand;
this will overlap with the previous since the least element
$g+h+(h+1)=g+2h+1$ is clearly smaller than $3f+2g+2h-3$.
We then find these elements can not be in $S$:
\begin{eqnarray*}
g+h+(h+1), &\cdots&, g+h+f+g+h-1=2(g+h)+f-1\\
g+(h+1)+(h+2), &\cdots&, g+(h+1)+f+g+h-1=2(g+h)+f\\
&\vdots&\\
& &g+(f+g+h-2)+(f+g+h-1).
\end{eqnarray*}
This extends the block of forbidden entries to $2f+3g+2h-3$.

Finally, we further extend this by replacing $g$ by $h$ as a first
summand, which only leaves a hole with the existing block if
$$h+(h+1)+(h+2)>(2f+3g+2h-3)+1,$$
that is, when $d=h-g>2(f+g)-5,$
which for $f+g\geq d+1$ and $d\geq 2$ only holds when $(f,g,h)=(1,2,4)$,
a case excepted in this theorem.
We find that these blocks will also not be in $S$:
\begin{eqnarray*}
h+(h+1)+h+2, &\cdots&, h+(h+1)+f+g+h-1=f+g+3h\\
h+(h+2)+h+3, &\cdots&, h+(h+2)+f+g+h-1=f+g+3h+1\\
&\vdots&\\
& &(f+g+h-3)+(f+g+h-2)+(f+g+h-1).
\end{eqnarray*}
Altogether we obtain that the block of integers
$$f+g+h, f+g+h+1, \cdots, 3(f+g+h)-6$$
is not in $S$.

Step 3: in the next step we obtain a block of integers in $S$.
The sum plus 1 of the largest 3 elements found so far will be the next in $S$.
This is $M=3(f+g+h-1)-2$. Its successors, up to $M+f+g-1$ form a block
in $S$ of $f+g$ consecutive entries in $S$. This block consists of
the block of entries found in Step 1 shifted by $3(f+g)+2h-5$.

Step 4: Now we repeat the procedure described in Step 2 to find a block
of consecutive integers of length $2(f+g+h)-5$, except that we now use the
block of $f+g$ integers from Step 3 instead of the $f+g$ integers from Step 1.

Step 5: Repeat the procedure from Step 3 to obtain a consecutive block
of non-entries for $S$ using the block of elements from Step 4 instead
of the initial block of $f+g$ entries.

Then repeat Step 4 and Step 5 ad infinitum.

There is one important difference between the first block of
omissions and the subsequent ones. The $k$-th block of omissions starts
with the entry $f+g+B^{k}_1$ and end with $B^{1}_{f+g-1}+B^{1}_{f+g}+B^{k}_{f+g}$
except when $k=1$: then $B^{k}_{f+g}=B_1{f+g}$ which is used in the sum
already and it must be replaced by $B_{f+g-1}$. This explains why
the first block of omissions has size $2(f+g+h)-3$ while all subsequent
blocks of omissions have size $2(f+g+h)-1$.
\end{proof}
\begin{example}\rm
As a more or less random example, take the initial values $[3, 7, 13]$, so 
$f=3$, $g=7$ and $d=13-7=6$.
We find for the difference sequence 
$$D_{3,7,13}=(4, 6, 1^9, 42, \overline{1^9, 44}).$$
From this an initial part of $S_{3,7,13}$ is easily found: 
$$(3, 7, 13, 14, 15, 16, 17, 18, 19, 20, 21, 22, 64, 65, 66, 67, 68, 69, 70, 71, 
72, 73, 117, \cdots).$$
At first it may seem that the repetition starts from the block
$13, 14, \cdots, 22$, but it is followed by a block $23, 24, \cdots, 63$
of $41$ omissions, while after the next block of 10 entries $64, 65, \cdots, 73$ there is a block of $43$ omissions. The latter pattern repeats.
\end{example}
\begin{example}\rm
The case $S_{1,2,4}$ explicitly excluded from Theorem \ref{thm:gend>1} has
a larger than usual preperiod: $k=13$, while the period length is just 2.
The sequence starts as follows:
$$S_{1,2,4}=(1, 2, 4, 5, 6, 14, 16, 18, 30, 42, 43, 44, 56, 69, 70, 82, 83, \cdots).$$
\end{example}
\begin{example}\rm
To show how different sequences can behave for small $g$ relative to $d$,
we mention the case $[1,2,14]$, for which Theorem \ref{thm:gend>1} does
not apply: it turns out that
$S_{1, 2, 14}$ has a preperiod of length $2432$ and a period of length $1144$,
and it becomes periodic modulo 11099.
Compare this to the first `regular' case with $f=2$ and $d=12$: the sequence
$S_{2, 11, 23}$ has preperiod length 16, period 13 and becomes
periodic modulo 82.
\end{example}
The case $d=1$ is slightly different again.
\begin{theorem}\label{thm:d=1}
For every $f\geq 1$ and every $g\geq f+1$ the greedy 3-sumfree sequence $s_{f,g,g+1}$ is ultimately periodic modulo $10g+6f-3$, with preperiod length $g+f+3$ and period length $2g+2f-1$. More explicitly, $D_{f,g,g+1}$ is ultimately periodic in these cases, with preperiod
$$(g-f, 1^{f+g}, 4g+2f-2, 1$$
and period
$$(1^{f+g-2}, 4g+2f, 1^{f+g-1}, 4g+2f).$$
\end{theorem}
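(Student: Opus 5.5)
We plan to follow the strategy of the proof of Theorem \ref{thm:2subs}: first turn the claimed difference sequence into an explicit description of $S=S_{f,g,g+1}$ as finitely many extra elements together with finitely many full residue classes, and then prove by induction that this set coincides with the greedy sequence. Write $h=g+1$ and $m=10g+6f-3$. The period $(1^{f+g-2},4g+2f,1^{f+g-1},4g+2f)$ sums to exactly $m$, which is consistent with the claim. Unwinding the differences, the preperiod gives $f$, the block $g,g+1,\dots,2g+f$ and then $6g+3f-2$. The periodic part consists of the blocks $[6g+3f-1,\,7g+4f-3]$ and $[11g+6f-3,\,12g+7f-4]$ together with their translates by multiples of $m$. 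Reducing modulo $m$, the second block is $I=[g,\,2g+f-1]$. So the statement to prove is
$$z\in S\iff z\in\{f,\;2g+f,\;6g+3f-2\}\ \textrm{or}\ z\ge g \textrm{ and } z\bmod m\in I\cup J,$$
with $J=[6g+3f-1,\,7g+4f-3]$. As a sanity check, for $f=1$ this reduces exactly to Theorem \ref{thm:k=1d=1}: the extras become $\{1,2g+1,6g+1\}$, the classes become $[g,2g]\cup[6g+2,7g+1]$, and the modulus becomes $10g+3$. Let $T$ denote the right-hand side. Since the greedy sequence is determined by its first three terms, it suffices to show two things: no element of $T$ is a sum of three distinct smaller elements of $T$, and every $z>g+1$ not in $T$ is such a sum. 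An induction on $z$ then gives $S=T$.

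For the sumfree part, we compute the possible residues of $x+y+z$ with $x,y,z$ taken from the classes. Here we must remember that distinct elements may lie in the same residue class, since they can be different lifts. Three elements of $I$ give sums in $[3g+3,\,6g+3f-6]$. Two from $I$ and one from $J$ give sums reaching at most $11g+6f-5<m+g$ and at least beyond $7g+4f-3$. One from $I$ and two from $J$ give, modulo $m$, residues in roughly $[3g+1,\,6g+3f-4]$. Three from $J$ give residues in roughly $[8g+3f,\,g-3+m]$, which lies outside $I$ after reduction. In every case the residue falls in the gap between $I$ and $J$, or between $J$ and $I+m$. It remains to repeat this check for sums involving one, two or three of the extras $f$, $2g+f$ and $6g+3f-2$, and to verify that the extras themselves are not $3$-sums. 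The latter is a short check against the small initial elements, using $g\ge f+1$.

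For the covering part, for each residue $r\notin I\cup J$ we exhibit an explicit representation $z=a+b+c$ with $a,b$ among the small elements $f,g,g+1,\dots,2g+f$ and $c$ a suitable lift in $I\cup J$. This works as in the bulleted list of the proof of Theorem \ref{thm:2subs}. The gap $(2g+f,\,6g+3f-1)$ is covered by sums of the form $f+(\textrm{block})+(\textrm{block})$, $g+(\cdot)+(\cdot)$ and $(g+1)+(\cdot)+(\cdot)$, exactly as in Step 2 of the proof of Theorem \ref{thm:gend>1}. The gap $(7g+4f-3,\,m+g)$ is covered by sums with one summand from $J$ shifted by small elements.

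We expect the main obstacle to be the boundaries: the preperiod, and the residues adjacent to $I$ and $J$. There the distinctness requirement and the missing element $2g+f$ (present in the initial block but absent from the class $I$) change which sums are available. This is what makes the first gap $4g+2f-2$ differ from the later gaps $4g+2f$, and what makes $6g+3f-2$ an isolated extra element. We would handle these finitely many boundary residues and the initial segment up to about $m+7g+4f$ by direct case analysis, using the extras $f$, $2g+f$ and $6g+3f-2$ explicitly. Once the induction covers one full period beyond this segment, the general case follows from the residue computations above.
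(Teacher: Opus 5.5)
Your proof is correct, but it takes a different route from the one the paper indicates.

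\textbf{Comparison with the paper.} The paper gives no proof of this theorem. It only says one may follow the structure of the proof of Theorem~\ref{thm:gend>1}: simulate the greedy process forward, producing alternately a run of members and a contiguous run of non-members (the latter as a union of overlapping intervals of $3$-sums), with each new run of members a shift of an earlier one, and then ``repeat ad infinitum''.

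You instead decode the stated $D$ into the closed form $T$: the extras $f$, $2g+f$, $6g+3f-2$, together with the classes $I=[g,2g+f-1]$ and $J=[6g+3f-1,7g+4f-3]$ modulo $m=10g+6f-3$. This decoding is right, and your $f=1$ check against Theorem~\ref{thm:k=1d=1} is a good sanity test. You then verify $T$ as in Theorem~\ref{thm:2subs}.

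Your route buys rigour. The infinite part becomes a finite list of interval inequalities, all valid for $g\ge f$, so the induction on $z$ is airtight. The block-by-block argument leaves implicit why later runs of members never create sums landing in later runs. What the forward simulation buys is an explanation of where $T$ comes from, which you do not need since the statement supplies $D$.

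\textbf{Details to fix when you write it out.}

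First, with repeated residues, three elements of $I$ give residues in $[3g,6g+3f-3]$, not $[3g+3,6g+3f-6]$. This range is still in the gap; it ends just below the residue of $6g+3f-2$.

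Second, the residues of the extras lie inside your gaps and are actually hit. Examples are $f+g+(g+km)$ and $(2g+f)+(2g+f-1)+(2g+f-1+km)$, where distinctness forces $k\ge1$. You must note that such sums are always proper lifts $e+jm$, with $j\ge1$, of an extra $e$, and hence are not in $T$.

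Third, your covering template cannot produce $z=km+7g+4f-2$. That template uses two summands from $\{f\}\cup[g,2g+f]$ and one lift in $I\cup J$. The pair sums fill exactly $[g+f,4g+2f-1]$, which forces the third summand into residues $[3g+2f-1,6g+3f-2]$, outside $I\cup J$. Use instead $z=f+(6g+3f-2)+(km+g)$, which is valid for all $k\ge0$ and lifts periodically. This is the boundary case where the isolated extra is genuinely needed, not only in the preperiod.

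All other gap residues are covered as you describe:
\begin{itemize}
\item $km+I$ plus pair sums fills exactly $[km+2g+f,\,km+6g+3f-2]$, where the top end uses $2g+f$ as a summand;
\item $km+J$ plus pair sums fills $[km+7g+4f-1,\,(k+1)m+g-1]$.
\end{itemize}

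Finally, for $k=0$ the first interval to cover is $[2g+f+1,6g+3f-3]$, not $(2g+f,6g+3f-1)$, because $6g+3f-2\in T$ is not a $3$-sum.
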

\begin{proof}
We leave the (again rather tedious) proof, that may follow the same structure as that of Theorem \ref{thm:gend>1}, to the reader.
\end{proof}
\begin{example}\rm
As an example, take the initial values $[4,7,8]$, so 
$f=4$, $g=7$ and $d=1$.
Then the difference sequence 
$D_{4,7,8}$ equals
$$(3, 1^{11}, 34, 1, \overline{1^9, 36, 1^{10}, 36}).$$
Hence $S_{4,7,8}$ starts as
$$(4, 7\cdots18, 52\cdots62,98\cdots108,144\cdots153,189\cdots199,\cdots),$$
and the first period (of length 21) properly starts at 53 (not 52!)
and ends at 108.
\end{example}
Taken together, Theorems \ref{thm:d=1} and \ref{thm:gend>1} furnish a proof for Theorem \ref{thm:k=3d>1short} in the Introduction.

\section{Computational results}\label{sec:small}
As is rather conspicuous from Theorems \ref{thm:k=1d=1}, \ref{thm:f=1d>1}, \ref{thm:gend>1} and \ref{thm:d=1}, the general theorems usually exclude finitely
many smaller cases for the parameters.
In this section we describe our efforts to systematically explore such
cases computationally, exploiting Lemma \ref{lemma}. The aim was to
deal with cases $d\leq 25$ left open by Theorem \ref{thm:gend>1}, explicitly
with all pairs $f, g$ such that $1\leq f<g<25$.

Our computations, results of which can be found in 
Table \ref{short} and Table \ref{dat},
consisted of two main parts: first to compute enough terms of $S_{f,g,g+d}$
to get a conjectured value for the pre-period $k$ and the period $p$
(both taken as small as possible) and next to use this in an application
of Lemma \ref{lemma} to {\it prove} that these values indeed give
the correct (pre)periods; compare Section \ref{subsec:periods}. We have not
included the results we obtained for $11\leq d\leq 25$ in 
Table \ref{short}; these can be found in the preprint version of this paper.

The tables also list the value $m$, which is the sum of the entries of
$C_{f,g,g+d}$ in one period; as pointed out in the introduction, the
sequences $S_{f,g,g+d}$ will be ultimately periodic modulo $m$.

In Table \ref{dat} we summarize all exceptionally large values for
period and preperiod of $S_{f,g,h}$
that occurred for all 2024 triples $(f, g, h)$ with $1\leq f<g<h$ for which
both $f\leq 2$ and $d=h-g\leq 25$; these include all cases
for which a period or
pre-period length larger than 5000 was found, and also the 12 cases
for which ultimate periodicity could not (yet) be established.
The latter cases are those with values $k=p=-1$ and $m=0$ in the table.
For these 12 cases we have calculated at least 500000 entries of $S_{f,g,h}$.

The complete collection of data establishes a proof for 
Theorem \ref{thm:smallshort}.

\begin{table}[ht]
\parbox{.48\linewidth}{
        \centering
\begin{tabular}{|c|c|c|c|c|c|}
\hline
$d$ & $(f, g, h)$ & $k$ & $p$ &$m$\\
\hline
4 & ( 1, 2, 6 ) & 7 & 6 & 28\\
\hline
5 & ( 1, 2, 7 ) & 9 & 8 & 44\\
5 & ( 1, 3, 8 ) & 9 & 6 & 38\\
5 & ( 2, 3, 8 ) & 209 & 237 & 2215\\
\hline
6 & ( 1, 2, 8 ) & 9 & 8 & 41\\
6 & ( 1, 3, 9 ) & 10 & 7 & 41\\
6 & ( 1, 4, 10 ) & 10 & 6 & 48\\
6 & ( 2, 3, 9 ) & 12 & 7 & 44\\
6 & ( 2, 4, 10 ) & 10 & 6 & 43\\
6 & ( 3, 4, 10 ) & 10 & 7 & 38\\
\hline
7 & ( 1, 2, 9 ) & 7 & 6 & 37\\
7 & ( 1, 3, 10 ) & 24 & 21 & 193\\
7 & ( 1, 4, 11 ) & 11 & 7 & 51\\
7 & ( 1, 5, 12 ) & 27 & 32 & 321\\
7 & ( 2, 3, 10 ) & 9 & 9 & 57\\
7 & ( 2, 4, 11 ) & 13 & 103 & 928\\
7 & ( 2, 5, 12 ) & 11 & 7 & 51\\
7 & ( 3, 4, 11 ) & 11 & 7 & 49\\
7 & ( 3, 5, 12 ) & 11 & 8 & 45\\
7 & ( 4, 5, 12 ) & 12 & 9 & 48\\
\hline
8 & ( 1, 2, 10 ) & 12 & 11 & 96\\
8 & ( 1, 3, 11 ) & 22 & 11 & 101\\
8 & ( 1, 4, 12 ) & 44 & 60 & 518\\
8 & ( 1, 5, 13 ) & 33 & 10 & 117\\
8 & ( 1, 6, 14 ) & 30 & 51 & 502\\
8 & ( 2, 3, 11 ) & 41 & 9 & 61\\
8 & ( 2, 4, 12 ) & 12 & 7 & 57\\
8 & ( 2, 5, 13 ) & 39 & 38 & 362\\
8 & ( 2, 6, 14 ) & 12 & 8 & 59\\
8 & ( 3, 4, 12 ) & 14 & 110 & 891\\
8 & ( 3, 5, 13 ) & 12 & 8 & 57\\
8 & ( 3, 6, 14 ) & 12 & 9 & 52\\
8 & ( 4, 5, 13 ) & 12 & 9 & 50\\
8 & ( 4, 6, 14 ) & 13 & 10 & 55\\
8 & ( 5, 6, 14 ) & 14 & 11 & 58\\
\hline
9 & ( 1, 2, 11 ) & 52 & 12 & 114\\
9 & ( 1, 3, 12 ) & 24 & 45 & 423\\
9 & ( 1, 4, 13 ) & 25 & 27 & 249\\
9 & ( 1, 5, 14 ) & 62 & 83 & 729\\
9 & ( 1, 6, 15 ) & 38 & 12 & 135\\
9 & ( 1, 7, 16 ) & 33 & 74 & 719\\
9 & ( 2, 3, 12 ) & 15 & 10 & 76\\
\hline
\end{tabular}
}
\hfill
\parbox{.48\linewidth}{
        \centering
\begin{tabular}{|c|c|c|c|c|c|}
\hline
$d$ & $(f, g, h)$ & $k$ & $p$ &$m$\\
\hline
9 & ( 2, 4, 13 ) & 50 & 72 & 585\\
9 & ( 2, 5, 14 ) & 42 & 125 & 1237\\
9 & ( 2, 6, 15 ) & 146 & 250 & 2433\\
9 & ( 2, 7, 16 ) & 13 & 9 & 67\\
9 & ( 3, 4, 13 ) & 13 & 42 & 330\\
9 & ( 3, 5, 14 ) & 15 & 969 & 8453\\
9 & ( 3, 6, 15 ) & 13 & 9 & 65\\
9 & ( 3, 7, 16 ) & 13 & 10 & 59\\
9 & ( 4, 5, 14 ) & 13 & 9 & 63\\
9 & ( 4, 6, 15 ) & 13 & 10 & 57\\
9 & ( 4, 7, 16 ) & 14 & 11 & 62\\
9 & ( 5, 6, 15 ) & 14 & 11 & 60\\
9 & ( 5, 7, 16 ) & 15 & 12 & 65\\
9 & ( 6, 7, 16 ) & 16 & 13 & 68\\
\hline
10 & ( 1, 2, 12 ) & 12 & 10 & 64\\
10 & ( 1, 3, 13 ) & 43 & 46 & 442\\
10 & ( 1, 4, 14 ) & 25 & 14 & 129\\
10 & ( 1, 5, 15 ) & 32 & 31 & 287\\
10 & ( 1, 6, 16 ) & 225 & 377 & 3298\\
10 & ( 1, 7, 17 ) & 43 & 14 & 153\\
10 & ( 1, 8, 18 ) & 36 & 101 & 972\\
10 & ( 2, 3, 13 ) & 21 & 19 & 148\\
10 & ( 2, 4, 14 ) & 29 & 31 & 279\\
10 & ( 2, 5, 15 ) & 125 & 26 & 220\\
10 & ( 2, 6, 16 ) & 41 & 14 & 149\\
10 & ( 2, 7, 17 ) & 188 & 86 & 786\\
10 & ( 2, 8, 18 ) & 14 & 10 & 75\\
10 & ( 3, 4, 14 ) & 15 & 10 & 81\\
10 & ( 3, 5, 15 ) & 14 & 9 & 73\\
10 & ( 3, 6, 16 ) & 56 & 69 & 612\\
10 & ( 3, 7, 17 ) & 14 & 10 & 73\\
10 & ( 3, 8, 18 ) & 14 & 11 & 66\\
10 & ( 4, 5, 15 ) & 16 & 178 & 1433\\
10 & ( 4, 6, 16 ) & 14 & 10 & 71\\
10 & ( 4, 7, 17 ) & 14 & 11 & 64\\
10 & ( 4, 8, 18 ) & 15 & 12 & 69\\
10 & ( 5, 6, 16 ) & 14 & 11 & 62\\
10 & ( 5, 7, 17 ) & 15 & 12 & 67\\
10 & ( 5, 8, 18 ) & 16 & 13 & 72\\
10 & ( 6, 7, 17 ) & 16 & 13 & 70\\
10 & ( 6, 8, 18 ) & 17 & 14 & 75\\
10 & ( 7, 8, 18 ) & 18 & 15 & 78\\
\hline
\end{tabular}
}
\bigskip
\caption{Small cases: all cases with $d\leq 10$ for which Theorem
\ref{thm:k=3d>1short} does not apply. See Table \ref{dat} for further explanation of the data.\label{short}}
\end{table}

{\small
\begin{table}[ht]\label{tab:small}
\begin{tabular}{|c|c|c|c|c|c|c|c|c|}
\hline
$(f, g, h)$ & $k$ & $p$ &$m$ && $(f, g, h)$ & $k$ & $p$ &$m$\\
\hline
 ( 4, 6, 17 ) &  $-1$ &  $-1$ &  0 &&
 ( 9, 11, 32 ) &  27 &  6642 &  54721 \\
 ( 4, 8, 21 ) &  14620 &  6651 &  60839 &&
 ( 1, 14, 36 ) &  11711 &  16276 &  150981\\
 ( 1, 2, 16 ) &  5986 &  11817 &  122842 &&
 ( 2, 17, 39 ) &  5489 &  4907 &  42958\\
 ( 4, 5, 19 ) &  $-1$ &  $-1$ &  0 &&
 ( 3, 15, 37 ) &  28900 &  16134 &  139289\\
 ( 1, 9, 24 ) &  $-1$ &  $-1$ &  0 &&
 ( 4, 5, 27 ) &  4779 &  11302 &  105810\\
 ( 4, 10, 25 ) &  11756 &  1056 &  10044 &&
 ( 6, 8, 30 ) &  8874 &  22170 &  185344\\
 ( 3, 4, 20 ) &  6556 &  30223 &  269136 &&
 ( 8, 13, 35 ) &  8324 &  6024 &  53145\\
 ( 7, 9, 26 ) &  $-1$ &  $-1$ &  0 &&
 ( 1, 15, 38 ) &  $-1$ &  $-1$ &  0\\
 ( 1, 11, 29 ) &  $-1$ &  $-1$ &  0 &&
 ( 1, 17, 40 ) &  7812 &  1522 &  13637\\
 ( 2, 10, 28 ) &  7412 &  6159 &  55263 &&
 ( 2, 14, 37 ) &  12496 &  17367 &  157607\\
 ( 4, 7, 25 ) &  18758 &  25649 &  223894 &&
 ( 3, 13, 36 ) &  15034 &  46410 &  411212\\
 ( 6, 11, 29 ) &  6154 &  754 &  6831 &&
 ( 3, 17, 40 ) &  5776 &  5165 &  44651\\
 ( 1, 12, 31 ) &  $-1$ &  $-1$ &  0 &&
 ( 4, 5, 28 ) &  13470 &  5399 &  52314\\
 ( 4, 5, 24 ) &  159 &  10566 &  88622 &&
 ( 4, 15, 38 ) &  11495 &  1456 &  12403\\
 ( 5, 7, 26 ) &  $-1$ &  $-1$ &  0 &&
 ( 5, 6, 29 ) &  237 &  5189 &  43110\\
 ( 5, 13, 32 ) &  37884 &  2898 &  27048 &&
 ( 6, 16, 39 ) &  98158 & 6114 & 56324 \\
 ( 8, 10, 29 ) &  25 &  45754 &  378413 &&
 ( 10, 12, 35 ) &  467 &  8999 &  73918\\
 ( 1, 2, 22 ) &  2697 &  12526 &  129174 &&
 ( 2, 14, 38 ) &  18296 &  15987 &  145163\\
 ( 2, 13, 33 ) &  5196 &  2140 &  18830 &&
 ( 5, 6, 30 ) &  4852 &  13532 &  116206\\
 ( 3, 4, 24 ) &  9638 &  2795 &  30061 &&
 ( 6, 13, 37 ) &  $-1$ &  $-1$ &  0\\
 ( 6, 7, 27 ) &  $-1$ &  $-1$ &  0 &&
 ( 8, 15, 39 ) &  23128 &  3770 &  33855\\
 ( 7, 12, 32 ) &  68183 &  18734 &  166708 &&
 ( 1, 16, 41 ) &  20504 &  17752 &  164207\\
 ( 1, 13, 34 ) &  8170 &  1293 &  12001 &&
 ( 3, 14, 39 ) &  6139 &  5349 &  47590\\
 ( 2, 15, 36 ) &  27299 &  15239 &  133572 &&
 ( 4, 5, 30 ) &  9210 &  9947 &  101760\\
 ( 3, 13, 34 ) &  5527 &  2276 &  19711 &&
 ( 5, 6, 31 ) &  $-1$ &  $-1$ &  0\\
 ( 4, 5, 26 ) &  6057 &  7307 &  66072 &&
 ( 7, 13, 38 ) &  $-1$ &  $-1$ &  0\\
 ( 7, 13, 34 ) &  6115 &  779 &  7028 &&
 ( 11, 13, 38 ) &  5648 &  5641 &  46223\\
 ( 8, 9, 30 ) &  13999 &  4768 &  36907 &&
 & & & \\
\hline
\end{tabular}
\bigskip
\caption{Large cases: all $(f,g,g+d)$ with $d,f\leq 25$ and $f+g\leq d$ for which either the length $k$ of the preperiod or $p$ of the period of $S_{f,g,g+d}$ exceeds 5000. An entry with $k=p=-1$ signifies that no periodicity was found at 500000 terms. The final column lists the modulus of periodicity.\label{dat}}
\end{table}

}

\end{document}